\documentclass[a4paper,10pt]{article}

\title{Odd Hadwiger number and graph products}

\usepackage{amsmath,amssymb,amsthm}
\usepackage{mathrsfs}
\usepackage{bm}
\usepackage{comment}

\usepackage{lmodern}
\usepackage{float}
\usepackage{todonotes}{}
\usepackage[T1]{fontenc}
\usepackage[ mode=buildnew]{standalone}
\usepackage[utf8]{inputenc}
\usepackage{xcolor}
\usepackage{fullpage}
\usepackage{array}
\usepackage{wasysym}
\usepackage{cleveref}

\usepackage{color}
\usepackage{fancyvrb}
\usepackage{tikz}
\usepackage{wasysym}
\usepackage{authblk}
\usepackage{graphicx}
\usepackage{natbib}
\usepackage{amssymb}
\usepackage{comment}
\usepackage{enumerate}

\newtheorem{theorem}{Theorem}[section]

\newtheorem{lemma}[theorem]{Lemma}
\newtheorem{corollary}[theorem]{Corollary}

\newtheorem{claim}[theorem]{Claim}

\newtheorem{problem}[theorem]{Problem}

\usepackage{ulem}

\newenvironment{claimproof}{ \trivlist
	\item[\hskip\labelsep
	\textit{Proof of the claim}.]\ignorespaces
}{\hfill$\vartriangleleft$\medskip
	
}

\author[1]{Henry Echeverr\'ia}
\author[1,2]{Andrea Jim\'enez}
\author[3]{Suchismita Mishra}
\author[1]{Daniel A. Quiroz}
\author[1]{Mauricio Y\'epez}
\date{\today}
	
\affil[1]{\small Instituto de Ingenier\'ia Matem\'atica-CIMFAV, Universidad de Valpara\'iso, Chile.} 

\affil[2]{Millennium Nucleus for Social Data Science (SODAS), Santiago, Chile}

\affil[3]{National Institute of Informatics, Japan}

\usepackage{ulem}

\newcommand{\oddhad}{{\rm{oh}}}
\newcommand{\had}{{\rm{h}}}

\newcommand{\cacher}[1]{}

\begin{document}

\maketitle

\begin{abstract}
The Odd Hadwiger number of a graph $G$ is the largest integer $r$ such that $G$ has a clique of size $r$ as an odd~minor. In this paper, we investigate how large is the Odd Hadwiger number of the product of two graphs, when considering any of the four standard graph products: Cartesian, direct, lexicographic, strong.  We provide an optimal lower bound in the cases of the strong and lexicographic products. 
\end{abstract}

\noindent \textbf{Keywords:} odd~minor, lexicographic product, strong product, Cartesian product, direct product 

\section{Introduction}

Since Wagner's characterization of planar graphs by excluded minors~\cite{wagner1937}, the theory of graph minors has been a central topic in Graph Theory.  The \underline{Hadwiger number}, $\had(G)$, of a graph $G$ is the maximum integer~$t$ such that $K_t$ is a  minor of $G$. This is a widely studied parameter. In particular, a famous conjecture of Hadwiger \cite{hadwiger1943klassifikation}, which aims at a great generalisation of the Four Colour Theorem, states the following: for every graph $G$, we have $\chi(G)\le \had(G)$. This conjecture holds trivially when $\had(G)\le 2$. Dirac~\cite{dirac1952} showed that any graph $G$ with $\had(G)=3$ is three colourable. By another result of Wagner~\cite{wagner1937}, the Four Colour Theorem implies the case for $\had(G)=4$. Further, by using the Four Colour Theorem, Robertson, Seymour and Thomas \cite{robertson1993hadwiger} verified the case $\had(G)=5$. For larger values, a recent result of Delcourt and Postle \cite{delcourt2025reducing} says that every graph $G$ with $\had(G)\leq t-1$ is $O(t \log \log t)$ colourable. 

A graph $H$ is said to be an \underline{odd~minor} of another graph $G$ if we can obtain $H$ from $G$ by (iteratively) deleting vertices and edges, and contracting all the edges of an edge-cut. The \underline{Odd Hadwiger number}, $\oddhad(G)$, of a graph $G$, is the maximum integer $t$ such that $K_t$ is an odd~minor of $G$. While classes with bounded Hadwiger number are sparse, classes with bounded Odd Hadwiger number can be arbitrarily dense. For instance, it is not hard to see that bipartite graphs exclude $K_3$ as an odd~minor. Indeed, $G$ is bipartite if and only if $\oddhad(G)\le2$.

Odd minors have mostly been considered through a strengthening of Hadwiger's conjecture which was proposed by Gerards and Seymour (see  \cite{jensen2011graph}): for every graph $G$, we have $\chi(G)\le \oddhad(G)$. This conjecture, known as the Odd Hadwiger conjecture, trivially holds for $\oddhad(G)\le 2$, and was proved for $\oddhad(G) = 3$ by Catlin~\cite{C79}. After much work on this conjecture (see e.g.~\cite{EcheverriaJimenezMishraPastineQuirozYepez2025,Geelen,NorinSong,SongThomas2017,S21}), it was recently disproved by  K{\"u}hn, Sauerman, Steiner and Wigderson \cite{kuhn2025disproof}.

In this paper, we investigate the behaviour of the Odd Hadwiger number not in terms of the chromatic number, but in regard to the Cartesian, direct, lexicographic, and strong product of graphs. Our main result is the following.

\begin{theorem}\label{thm:main}
    Let $G$ and $H$ be two graphs with Odd Hadwiger numbers $s$ and $t$, respectively, then $\oddhad(G \ast H) \geq \oddhad(K_s \ast K_t)$, where $\ast$ is either Cartesian, lexicographic or strong product.
\end{theorem}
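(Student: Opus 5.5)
The plan is to prove a monotonicity statement: if $K_s$ is an odd minor of $G$ and $K_t$ is an odd minor of $H$, then $K_s \ast K_t$ is an odd minor of $G \ast H$. Applying this to any odd minor $K_r$ of $K_s\ast K_t$ then gives $\oddhad(G\ast H)\ge \oddhad(K_s\ast K_t)$, since odd minors compose transitively.

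First I will recast odd minors in terms of models. $K_s$ is an odd minor of $G$ if and only if there are vertex-disjoint trees $T_1,\dots,T_s$ in $G$ and a $2$-colouring $c$ of $\bigcup V(T_i)$ with the following properties: every edge of every $T_i$ is bichromatic, and for each pair $i\ne j$ there is a monochromatic edge of $G$ between $T_i$ and $T_j$. This is the standard equivalence; deleting unused vertices and edges and contracting the edge-cut given by the colouring recovers the minor. Let $(T_i, c)$ be such a model in $G$ and $(S_k,d)$ one for $K_t$ in $H$.

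Next I build the product model. Its branch sets are $B_{ik}=V(T_i)\times V(S_k)$, colored by $\gamma(u,v)=c(u)+d(v) \pmod 2$. I need three things.

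\begin{enumerate}
\item Each $B_{ik}$ contains a spanning tree with only bichromatic edges. For the Cartesian product, take the edges $(u,v)(u',v)$ with $uu'\in E(T_i)$ and $(u,v)(u,v')$ with $vv'\in E(S_k)$. Each of these changes exactly one coordinate's colour, so it is bichromatic, and together they span a connected graph. The lexicographic and strong products contain the Cartesian product as a spanning subgraph, so this part is uniform across all three products.
\item Every adjacency of $K_s\ast K_t$ is realised by a monochromatic edge. In the Cartesian case, $(i,k)\sim(j,k)$ for $i\ne j$ is realised by taking the monochromatic edge $uu'$ between $T_i$ and $T_j$ together with any $v\in S_k$; then $(u,v)(u',v)$ is monochromatic. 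Symmetrically for $(i,k)\sim(i,l)$. In the strong product, the extra adjacency $(i,k)\sim(j,l)$ with $i\ne j$ and $k\ne l$ uses monochromatic edges $uu'$ and $vv'$; the diagonal edge $(u,v)(u',v')$ has colour change $0+0$, so it is monochromatic.
\item The lexicographic adjacency $(i,k)\sim(j,l)$ with $i\ne j$ and arbitrary $k,l$ requires $(u,v)(u',v')$ for an edge $uu'$ and any $v,v'$.
\end{enumerate}

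The third item is the main obstacle, since the parity of $d(v)+d(v')$ must be $0$ while $v\in S_k$ and $v'\in S_l$ may be arbitrary. I will choose $v,v'$ of the same $d$-colour. This is possible whenever each $S_k$ contains both colours, or all branch sets have a common colour available. If some $S_k$ is a single vertex, I expect it suffices to recolour that singleton branch set, or to use the freedom of choosing $u u'$ monochromatic of either colour. I will handle this by first normalising the model of $K_t$ in $H$: since monochromatic cross edges only care about parity, flipping the colouring of the whole of $H$ componentwise is harmless. Moreover, in $G[H]$ I am free to use a full copy of $H$ on each vertex. So I will instead let the $H$-coordinate range over all of $V(S_k)$, and pick, for each pair, witnesses $v\in S_k$ and $v'\in S_l$ with $d(v)=d(v')$. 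If some pair of singletons has opposite colours, I will fall back on choosing $uu'$ bichromatic: since $u,u'$ lie in different trees and are adjacent in $G$, this requires an edge between $T_i$ and $T_j$ of the right parity. Securing such an edge is the delicate point, and I would verify it by possibly enlarging the trees along the monochromatic edge to produce an edge of either parity.
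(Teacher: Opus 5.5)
Your constructions for the Cartesian and strong products are correct. You take a spanning tree of $T_i\square S_k$ as the branch set and colour by $\gamma(u,v)=c(u)+d(v)$. Every tree edge is then bichromatic, and the edges $(u,v)(u',v)$, $(u,v)(u,v')$ and $(u,v)(u',v')$ built from monochromatic edges $uu'$ and $vv'$ are monochromatic. This is a different route from the paper's. The paper blows up one factor at a time (branch trees $\{u\}\times V(T_v)$ coloured by $c'(u,y)=c(y)$) and then uses transitivity and commutativity. Your simultaneous product colouring gives Theorem~\ref{thm:strongmain} in a single step.

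The gap is the lexicographic case. Item~3 is left unresolved (``securing such an edge is the delicate point''), and the fallbacks you sketch are not sound as stated. Recolouring a singleton branch set can destroy the monochromatic edges joining it to other branch sets. A bichromatic edge between $T_i$ and $T_j$ need not exist: if both are singletons, their only connecting edge is the monochromatic one.

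The missing observation is that there is no obstacle at all. For $k\neq l$, the monochromatic edge $vv'$ between $S_k$ and $S_l$ already provides $v\in S_k$ and $v'\in S_l$ with $d(v)=d(v')$. In particular, two singleton branch sets always have the same colour, so the case you fear never arises. Hence the edge needed for $(i,k)\sim(j,l)$ with $i\neq j$ is just the strong-product edge $(u,v)(u',v')$ from item~2, or $(u,v)(u',v)$ when $k=l$.

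This is exactly the paper's argument. $G\boxtimes H$ is a spanning subgraph of $G\Circle H$ and $K_s\boxtimes K_t=K_s\Circle K_t=K_{st}$, so $\oddhad(G\Circle H)\ge\oddhad(G\boxtimes H)\ge st=\oddhad(K_s\Circle K_t)$. No separate lexicographic construction is needed.
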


In fact, for the Cartesian and strong products, we have the following stronger result.

\begin{theorem}\label{thm:strongmain}
    Let $G'$ and $H'$ be odd minors of $G$ and $H$, respectively. Then $G' \star H'$ is an odd minor of $G \star H$, where $\star$ represents Cartesian product or strong product.
\end{theorem}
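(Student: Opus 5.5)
The plan is to reduce Theorem~\ref{thm:strongmain} to a one-sided statement. It suffices to show that if $G'$ is an odd minor of $G$, then $G'\star H$ is an odd minor of $G\star H$ for every graph $H$. Both products are commutative up to isomorphism, so applying this once to each factor gives
\[
G'\star H' \preceq G'\star H \preceq G\star H ,
\]
where $\preceq$ denotes "is an odd minor of". The theorem then follows from transitivity of $\preceq$, which holds because the relation is defined by iterating the allowed operations.

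\textbf{Step 1 (model characterization).} I will first record an equivalent description of odd minors: $G'$ is an odd minor of $G$ if and only if there exist
\begin{itemize}
\item vertex-disjoint trees $T_v\subseteq G$, one for each $v\in V(G')$, and
\item a $2$-colouring $c$ of $\bigcup_v V(T_v)$,
\end{itemize}
such that every edge of every $T_v$ is bichromatic, and for every edge $uv\in E(G')$ there is a monochromatic edge of $G$ joining $T_u$ to $T_v$.

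For the forward direction I would induct on the number of operations, maintaining the trees and the colouring. Deletions are trivial. When an edge cut $\delta(X)$ is contracted, the branch sets that merge are glued along edges of the cut, and I recolour by flipping $c$ on all vertices lying in branch sets inside $X$. After this flip, the cut edges used for gluing become bichromatic, while edges of $G'$ that survive the contraction are exactly the non-cut edges, and these stay monochromatic.

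For the converse, starting from a model I delete every vertex outside the trees. I then delete every edge that is neither a tree edge nor one chosen monochromatic edge per edge of $G'$. In the resulting graph the bichromatic edges are exactly the tree edges, and they form the edge cut between the two colour classes. Contracting this cut yields $G'$.

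I expect this equivalence to be the main technical point, mainly the bookkeeping in the forward induction. If it is already standard I will simply cite it.

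\textbf{Step 2 (construction).} Fix a model $(\{T_v\},c)$ of $G'$ in $G$. For $(v,h)\in V(G'\star H)$, take as branch set the copy $T_v\times\{h\}$. This is a tree in $G\star H$, since the layers $G\times\{h\}$ induce copies of $G$ in both products. Colour each vertex by $c'(x,h)=c(x)$. The branch sets are pairwise disjoint, and tree edges remain bichromatic.

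Next I check the edges of $G'\star H$ by type:
\begin{itemize}
\item An edge $(u,h)(v,h)$ with $uv\in E(G')$ is witnessed by the copy $(x,h)(y,h)$ of a monochromatic edge $xy$ between $T_u$ and $T_v$.
\item An edge $(v,h)(v,h')$ with $hh'\in E(H)$ is witnessed by $(x,h)(x,h')$ for any $x\in T_v$. This edge is monochromatic precisely because the colouring is constant along the $H$-coordinate, which is why I chose $c'$ this way and did not add any $H$-dependent term.
\item For the strong product, a diagonal edge $(u,h)(v,h')$ with $uv\in E(G')$ and $hh'\in E(H)$ is witnessed by $(x,h)(y,h')$. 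This is an edge of $G\star H$ and is monochromatic.
\end{itemize}
By Step 1, $G'\star H$ is an odd minor of $G\star H$, completing the argument.
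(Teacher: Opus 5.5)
Your proposal is correct and follows the paper's proof: the same reduction to a single factor via commutativity and transitivity, the same branch trees (copies of $T_v$ inside one layer) with the colouring lifted from the factor, and the same three edge checks. You actually state the checks more carefully than the paper does, which writes $(u,v)(u',v)$ and $(u,y)(u,y')$ where $(u,y)(u',y)$ and $(u,y)(u',y')$ are meant.

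Your Step 1 is extra work that the paper skips, since it simply adopts the tree-and-colouring characterization as its working definition. Your sketch of the equivalence is sound, with one small repair in the forward direction: when contracting $\delta(X)$, glue along one lifted edge per edge of a spanning tree of each merged class rather than along all cut edges, so that the branch sets stay trees.
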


Note that this result cannot be extended to the direct product since $K_3 \times K_3$ is not an odd minor of $K_3 \times C_5$. 

To our knowledge, our paper is the first one to deal with the relationship between odd minors and graph products. However, there are various papers dealing with the relation between (usual) minors and graph products, especially regarding  Certesian product.  In 1976, Zelinka \cite{Zelinka1976} showed that for every pair of graphs $G$ and $H$ we have $\had(G\Box H)\geq \had(G)+\had(H)-1$.  
Chandran, Kostochka, and Raju \cite{ChandranKostochkaRaju2008} strengthened this result by proving a bound of $\had(G\Box H)\geq \had(G)\sqrt{\had(H)}(1 - o(1))$, which is best possible and gives an asymptotic answer to a question of Miller~\cite{Miller1978}. Wood~\cite{Wood2011} provided various related results and also studied the structure of the Cartesian product of graphs with bounded Hadwiger number.

As an application of Theorem~\ref{thm:main}, we prove the following result, which essentially generalises Zelinka's initial result, save for one vertex.

\begin{theorem}\label{thm:ourzelinka}
    Let $G$ and $H$ be graphs with $\oddhad(G) = s\geq2$ and $\oddhad(H) = t\geq2$, then $\oddhad(G \square H) \geq s+t-2$. 
\end{theorem}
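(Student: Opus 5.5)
By Theorem~\ref{thm:main} applied to the Cartesian product, it suffices to prove $\oddhad(K_s \square K_t) \geq s+t-2$ for all $s,t\ge 2$. So the whole task reduces to exhibiting an odd $K_{s+t-2}$ minor in the rook's graph $K_s\square K_t$. Write its vertices as pairs $(i,j)$ with $i\in[s]$, $j\in[t]$; two vertices are adjacent when they agree in exactly one coordinate. We may assume $s\le t$ by symmetry.

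I would use the odd-minor model formulation. Choose disjoint trees $T_1,\dots,T_{s+t-2}$ (branch sets) together with a 2-colouring of their union such that each tree is properly coloured. Between every two trees there must be an edge whose endpoints have the same colour. This is equivalent to the contraction-of-an-edge-cut definition: the cut is the set of bichromatic edges.

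The natural construction takes $t-1$ singleton branch sets and $s-1$ branch sets that are paths, guided by Zelinka's classical model.
\begin{itemize}
\item For $j=2,\dots,t$, take the singletons $\{(1,j)\}$. These lie in row $1$, which is a clique $K_t$, so they are pairwise adjacent.
\item For $i=2,\dots,s$, take the two-vertex branch sets $B_i=\{(i,1),(i,i')\}$ for a suitable column $i'$, or, more simply, the single vertices $(i,1)$ of column $1$ extended along row $i$ so that each $B_i$ reaches every column $j\ge2$.
\end{itemize}

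To keep branch sets small, a cleaner choice is $B_i=$ row $i$ minus what is needed. Row $i$ (for $i\ge2$) is a clique, however, so it cannot be a properly 2-coloured tree, and the parity must be handled carefully. I would instead take $B_i=\{(i,1),(i,j_i)\}$, an edge with endpoints coloured differently. Then:
\begin{itemize}
\item $B_i$ and $B_{i'}$ meet via column $1$ at $(i,1)(i',1)$.
\item $B_i$ and the singleton $(1,j)$ meet via column $j$ at $(i,j)(1,j)$, which requires $(i,j)\in B_i$; this only works for one $j$.
\end{itemize}
So I would switch to long paths: let $B_i$ be the path $(i,1),(i,2),\dots$ inside row $i$. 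The path uses only some edges of the clique row $i$, and the unused chords are deleted. Each $(1,j)$ is then adjacent to $(i,j)$ through column $j$. Disjointness and all adjacencies are automatic, giving Zelinka's $s+t-1$ as an ordinary minor.

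The main obstacle is the parity condition. Colour the vertices along each path alternately. Each required connecting edge must be monochromatic, i.e.\ lie within a colour class. The edges $(1,j)(i,j)$ for all $i\ge2$ force, within column $j$, all of $(2,j),\dots,(s,j)$ to share the colour of $(1,j)$. The edges between distinct $B_i$ also need one monochromatic edge each, which we can pick anywhere in a common column. The difficulty is consistency: alternating colours along row-paths, plus the column constraints, can force two singletons $(1,j),(1,j')$ into different colours. But the singletons must pairwise have monochromatic adjacency, so they all need the same colour.

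Sacrificing one vertex resolves this. Drop one column, or let one branch set absorb a vertex of the other colour so that it can flip its parity. Concretely, I expect to give up one column (or one row's first vertex) and reroute each $B_i$ so that it visits the needed columns at even distance. For example, $B_i$ could go $(i,j)\to(1',\cdot)$ through an auxiliary row used as a connector. This yields $s+t-2$ branch sets. Checking the colour bookkeeping in this rerouting is where I expect the real work to be, and small cases such as $s=2$ should be verified separately.
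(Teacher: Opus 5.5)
Your reduction via Theorem~\ref{thm:main} to showing $\oddhad(K_s\square K_t)\ge s+t-2$ is exactly the paper's route. However, the proposal stops where the real proof has to begin: you never exhibit a valid odd model of $K_{s+t-2}$ in $K_s\square K_t$. Your diagnosis of why alternately coloured row-paths fail is correct. The singletons $(1,2),\dots,(1,t)$ must all share one colour, so each $B_i$ needs a vertex of that colour adjacent to every $(1,j)$. A path through row $i$ has the wrong colour on half its vertices. But your repair, ``rerouting'' each $B_i$ through an auxiliary connector row so that it visits columns at even distance, is never specified or checked. Row $1$ already holds the singletons and rows $2,\dots,s$ hold the $B_i$, so there is no spare row. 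It is not clear such rerouted sets could stay disjoint without losing a branch set. As written, the key step of the proof is missing.

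The missing idea is a different spanning tree inside the clique row $i$: a star instead of a path. Keep the singletons $(1,2),\dots,(1,t)$, all coloured $1$. For $i=2,\dots,s$, let $B_i$ be the star with centre $(i,1)$ and leaves $(i,2),\dots,(i,t)$. Colour the centre $2$ and every leaf $1$; this is proper on each star. Then the required monochromatic edges are as follows.
\begin{itemize}
\item Every leaf $(i,j)$ has the singletons' colour, so $(1,j)(i,j)$ joins $\{(1,j)\}$ and $B_i$.
\item Any two centres $(i,1),(i',1)$ are adjacent in column $1$ and both coloured $2$.
\item The singletons are pairwise adjacent in row $1$ and all coloured $1$.
\end{itemize}
This gives $(t-1)+(s-1)=s+t-2$ branch sets with only $(1,1)$ unused, and it also covers $s=2$ or $t=2$. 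It is precisely the paper's construction in Theorem~\ref{Theorem:CartesianComplete}, which places the centres in column $t$ instead of column $1$. Your own setup already had $s+t-2$ branch sets. So the ``sacrificed'' vertex is simply $(1,1)$, and no further rerouting is needed.
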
 

We do not know if this can be improved, for instance by generalising the result of Chandran, Kostochka and Raju to odd minors. We leave this as an open problem.

\begin{problem}
    Let $G$ and $H$ be graphs with $\oddhad(G) = s$ and $\oddhad(H) = t$. Is it true that $\oddhad (G \square H)\in \Omega(s\sqrt{t})$?
\end{problem}

The strong product case of Theorem~\ref{thm:main} tells us that $\oddhad(G \boxtimes H)\ge \oddhad(G) \cdot \oddhad(H)$, since $K_s \boxtimes K_t= K_{st}$, and thus $\oddhad (K_s \boxtimes K_t)=st$. However, there are graphs $G, H$ for which $\oddhad(G \boxtimes H)$  can be arbitrarily larger than $\oddhad(G) \cdot \oddhad(H)$. The following result tells us that this is true even for stars, and gives an alternative lower bound to that of Theorem~\ref{thm:main} for the lexicographic and strong products. (Here the first inequality comes simply from the fact that $G \boxtimes H$ is a subgraph of $G \Circle H$.)

\begin{theorem}\label{thm:maxdegree}
     For any two graphs $G$ and $H$,
    \begin{equation*}
        \oddhad(G \Circle H) \geq \oddhad(G \boxtimes H) \geq
        \begin{cases}
            \Delta(G)+1, \text{ if } \Delta(G) = \Delta(H)\\
            \min \{\Delta(G), \Delta(H)\}+2, \text{ otherwise.}\\
        \end{cases}
    \end{equation*}
\end{theorem}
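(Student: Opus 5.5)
Since $\oddhad$ is monotone under taking subgraphs and the strong product of subgraphs is a subgraph of the strong product, I will reduce the statement to products of stars. Let $a=\Delta(G)$ and $b=\Delta(H)$, and assume without loss of generality that $a\le b$. Then $G$ contains $K_{1,a}$ with centre $c$ and leaves $x_1,\dots,x_a$, and $H$ contains $K_{1,b}$ with centre $c'$ and leaves $y_1,\dots,y_b$. It therefore suffices to exhibit in $K_{1,a}\boxtimes K_{1,b}$ an odd minor $K_{a+1}$, and an odd minor $K_{a+2}$ when $a<b$. The first inequality $\oddhad(G\Circle H)\ge\oddhad(G\boxtimes H)$ is immediate, as noted before the statement.

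I will use the standard description of odd minors via models: disjoint vertex sets (branch sets) $B_1,\dots,B_r$, each containing a spanning tree, together with a $2$-colouring of their union such that every tree edge is bichromatic and, for every pair $i\ne k$, there is a monochromatic edge between $B_i$ and $B_k$. All remaining edges may be deleted.

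In the product, write $z=(c,c')$, $u_i=(x_i,c')$, $w_j=(c,y_j)$ and $p_{ij}=(x_i,y_j)$. By the definition of $\boxtimes$, $z$ is adjacent to everything, $u_i$ is adjacent to every $w_j$ (both coordinates are adjacent), and $p_{ij}$ is adjacent to $u_i$ and $w_j$. For $i=1,\dots,a$ I take the branch set $B_i=\{u_i,p_{ii},w_i\}$, which is the path $u_i\,p_{ii}\,w_i$. I colour $u_i$ and $w_i$ red and $p_{ii}$ blue, so both tree edges are bichromatic. I add the singleton branch set $\{z\}$, coloured red. Then for $i\ne k$ the edge $u_iw_k$ is red--red, and $z u_i$ is red--red. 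This gives an odd $K_{a+1}$ minor, which proves the case $\Delta(G)=\Delta(H)$.

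If $a<b$, the leaf $y_{a+1}$ exists and $w_{a+1}$ is unused. I add it as a further singleton branch set coloured red. It is adjacent to every $u_i$ and to $z$, all of which are red, so all required monochromatic edges are present and we obtain an odd $K_{a+2}$ minor, which gives the bound $\min\{\Delta(G),\Delta(H)\}+2$.

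The only delicate point is choosing the colouring so that the inner vertex $p_{ii}$ of each path gets the opposite colour while every inter-set connection runs through red--red edges. The remaining work is to check the adjacencies in $\boxtimes$ carefully and to dispose of degenerate cases such as $a=0$, where the bounds are trivial.
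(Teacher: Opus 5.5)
Your proposal is correct and follows essentially the paper's proof (the lemma on products of stars). The paper uses the same paths $(x_i,c')\,(x_i,y_i)\,(c,y_i)$, with both endpoints in one colour and the middle vertex in the other, the same extra singleton $(c,c')$, and, when the stars have different sizes, one further singleton at an unused leaf $(c,y_j)$; your reduction to stars via subgraph monotonicity and your treatment of $\Delta=0$ just spell out what the paper leaves implicit when it says the lemma immediately implies the theorem.
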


While we could not prove a version of Theorem~\ref{thm:main} for the direct product, we conjecture that such a result holds. Moreover, we prove a couple of lower bounds including the following.

\begin{theorem}\label{thm:directbound}Let $G$ and $H$ be graphs  with  $\oddhad(G)\ge s\geq3$ and $\oddhad(H)= 3t\geq9$. 
We have $\oddhad(H\times G)\geq \oddhad( K_{\lfloor s/2\rfloor} \times K_t)$. 
\end{theorem}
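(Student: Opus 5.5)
The plan is to build an explicit odd model of $K_{\lfloor s/2\rfloor}\times K_t$ inside $H\times G$ out of odd clique models in the two factors. We use the standard characterisation of odd minors: $K_r$ is an odd minor of a graph if and only if there are vertex-disjoint trees $T_1,\dots,T_r$ and a $2$-colouring $c$ of their vertices such that
\begin{itemize}
\item every tree edge is bichromatic, and
\item for every $a\ne b$ there is a monochromatic edge between $T_a$ and $T_b$.
\end{itemize}
The analogous statement holds for an arbitrary target graph, with one tree per vertex and one monochromatic edge per edge of the target.

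\textbf{The factors.} Fix such a model of $K_{3t}$ in $H$ and group its trees into $t$ triples. Call the $j$-th triple together with the three monochromatic edges joining its members $C_j$. Each $C_j$ is connected. It is also non-bipartite: the tree paths are bichromatic and the three joining edges are monochromatic, so the closed walk through the three trees has an odd number of monochromatic steps, hence odd length. Likewise, fix a model of $K_s$ in $G$ and pair up $2\lfloor s/2\rfloor$ of its trees. Call the $i$-th pair together with one monochromatic edge between its two members $P_i$. Each $P_i$ is connected.

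\textbf{Branch sets in the product.} For a pair index $i$ and a triple index $j$, take the branch set $B_{i,j}=V(P_i)\times V(C_j)$. By Weichsel's theorem, the direct product of two connected graphs, at least one of which is non-bipartite, is connected. Hence $P_i\times C_j$ is connected. Distinct pairs $(i,j)$ give disjoint branch sets.

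\textbf{Adjacency.} Let $i\ne i'$ and $j\ne j'$. Some tree $T$ in the $i$-th pair is joined to some tree $T'$ in the $i'$-th pair by a monochromatic edge $gg'$ of $G$. Similarly, some tree in the $j$-th triple is joined to some tree in the $j'$-th triple by a monochromatic edge $hh'$ of $H$. Then $(g,h)(g',h')$ is an edge of $H\times G$ between $B_{i,j}$ and $B_{i',j'}$. These are exactly the adjacencies of $K_{\lfloor s/2\rfloor}\times K_t$. At this stage we already have an ordinary minor model.

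\textbf{Colouring: the main obstacle.} The difficult step is to choose a $2$-colouring $\gamma$ of $H\times G$ and spanning trees of the $B_{i,j}$ that make the model odd. The naive choice $\gamma(g,h)=c_G(g)+c_H(h) \pmod 2$ fails, because an edge made of two tree edges flips both coordinates and is therefore monochromatic.

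The first thing I would try is $\gamma(g,h)=c_H(h)$. Under this colouring, the bichromatic edges inside $B_{i,j}$ are exactly the products of an edge of $P_i$ with a tree edge of the $j$-th triple. This subgraph is a product of a connected graph with a forest, so it is disconnected. I would then repair it using the odd cycle of $C_j$. The idea is to recolour one of the three trees of each triple, which is still allowed since $C_j$ has an odd cycle. Alternatively, one could replace the tree decomposition inside $C_j$ by a spanning tree of $C_j$ chosen so that a single monochromatic edge of the triangle closes the odd cycle. Either way, the bichromatic edges should then span $B_{i,j}$.

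I must also check that the inter-block edges $(g,h)(g',h')$ constructed above are monochromatic under $\gamma$. Under $\gamma=c_H$ this holds automatically because $hh'$ is monochromatic. If that check survives the recolouring, the proof is complete. If it does not, the fallback is to take the colouring mixed across the two coordinates in a way that depends on $i$ and $j$. This is possible because the quotient $K_{\lfloor s/2\rfloor}\times K_t$ has enough room to fix parities block by block, and it is where the loss from $s$ to $\lfloor s/2\rfloor$ and from $3t$ to $t$ is spent.
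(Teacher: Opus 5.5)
Your set-up (pairs $P_i$ in $G$, triples $C_j$ in $H$, Weichsel's theorem for connectivity of $P_i\times C_j$, products of monochromatic edges for adjacency) is sound and gives an ordinary minor model of $K_{\lfloor s/2\rfloor}\times K_t$. The odd part, which is the whole content of the statement, is not established, and the route you actually propose cannot work. Take any colouring $\gamma$ that, on a block, depends only on the $H$-coordinate. The bichromatic edges inside $P_i\times C_j$ then form $P_i\times C_j^{\mathrm{bi}}$, where $C_j^{\mathrm{bi}}$ is the set of edges of $C_j$ that are bichromatic. The graph $C_j^{\mathrm{bi}}$ is properly $2$-coloured, hence bipartite, whatever recolouring of the triple you choose; and $P_i$ is bipartite too. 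Let $X_1,X_2$ be the colour classes of $P_i$ and $Y_1,Y_2$ those in $V(C_j)$. No edge of the direct product joins $(X_1\times Y_1)\cup(X_2\times Y_2)$ to $(X_1\times Y_2)\cup(X_2\times Y_1)$, so the bichromatic edges never connect $B_{i,j}$. Your spanning-tree variant is the special case where $C_j^{\mathrm{bi}}$ is a tree: it gives exactly two components. The closing ``fallback'' is not an argument.

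The missing idea is to colour by the other coordinate. Flip the colours of the second tree of each pair. The new colouring $c'_G$ is proper on each tree $P_i$, since the joining edge becomes bichromatic. Any two pairs are still joined by a $c'_G$-monochromatic edge, namely the original one between their unflipped first trees. Set $\gamma(g,h)=c'_G(g)$. Every edge of $P_i\times C_j$ projects onto an edge of $P_i$, so all of them are bichromatic, and any spanning tree of the connected graph $P_i\times C_j$ is a valid branch tree. The odd cycle of $C_j$ is used only for connectivity, never for the colouring. For $i\neq i'$ and $j\neq j'$, let $gg'$ be the monochromatic edge between the first trees of pairs $i,i'$ and $hh'$ any edge of $H$ between $C_j$ and $C_{j'}$. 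Then $(g,h)(g',h')$ is $\gamma$-monochromatic. This gives an odd model of $K_{\lfloor s/2\rfloor}\times K_t$ in $H\times G$, and transitivity of odd minors finishes.

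This is the paper's proof. There the pairing step is the remark that every odd expansion of $K_s$ contains an inflated one of $K_{\lfloor s/2\rfloor}$, and transitivity is carried out explicitly by flipping $\phi$ on each block according to the witness colouring in $K_s\times K_t$. It also corrects your account of where the losses go. Halving $s$ makes each $G$-side branch set a non-singleton tree ($\{g\}\times V(C_j)$ is independent in a direct product) that is properly coloured yet still monochromatically joined to the others. Passing from $3t$ to $t$ supplies the odd cycle needed for Weichsel. Neither loss is spent on fixing parities block by block.
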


To end this introduction, we mention also that our main result is inspired by results in~\cite{collins2023clique, EJQMYim25,WU2025114237} which relate graph products to other notions of containment between graphs, namely immersions and totally odd immersions.

The rest of the paper is organised as follows. In Section~2, we present an equivalent definition for odd minors which we use throughout the paper, as well as the precise definition of each product to be studied. In Section~\ref{sec:main}, we prove that Theorem \ref{thm:strongmain} and use it to prove Theorem~\ref{thm:main}. In Section~\ref{sec:Cartesian} we prove Theorem~\ref{thm:ourzelinka}, and in Section~\ref{sec:lex}, we prove Theorem~\ref{thm:maxdegree}. Finally, in Section 5, we study the behaviour of odd Hawiger number in the direct product, proving, among others, Theorem~\ref{thm:directbound}.

\section{Preliminaries}\label{sec:prel}
All graphs in this paper are finite, simple and loopless. A graph $G$ is called an \emph{expansion} of another graph $H$ if there is a family of vertex-disjoint trees $(T_v)_{v\in V(H)}$ in $G$  such that for every edge $uv \in E(H)$, there exists at least one edge between $T_u$ and $T_v$. It is easy to see that $G$ contains $H$ as a minor if and only if $G$ is an expansion of $H$.

Now, suppose that $G$ is an expansion of $H$ and that there exists a $2$-colouring of $G$ which is a proper colouring in each $T_u, u \in V(H)$, while for any pair of trees $T_u, T_v,$ with $u,v\in V(H)$, at least one edge joining them is monochromatic. Then we say that $G$ is an \emph{odd expansion} of $H$, and that  $G$ contains $H$ as an \emph{odd minor}. Here, the graphs $T_v$ are called the \emph{branch trees} and the colouring $c$ is called the \emph{witness colouring}.

The \emph{lexicographic product} of $G$ and $H$, denoted by $G\Circle H$ is the graph with  vertex set $V(G)\times V(H)$ and where two vertices $(u_1,v_1)$, $(u_2,v_2)$ are adjacent if $u_1u_2\in E(G)$, or $u_1=u_2$ and $v_1v_2\in E(H)$.  The \emph{Cartesian product} of $G$ and $H$, denoted by $G\Square H$, is the graph on vertices $V(G)\times V(H)$ where two vertices $(u_1,v_1),(u_2,v_2)$ are adjacent if $u_1=u_2$ and $v_1v_2\in E(H)$, or $v_1=v_2$ and $u_1u_2\in E(G)$.
The \emph{direct product} (also known as tensor product) of $G$ and $H$, denoted by $G\times H$, is the graph with $V(G)\times V(H)$ as its vertex set and where two vertices $(u_1,v_1),(u_2,v_2)$ are adjacent if $u_1u_2\in E(G)$ and $v_1v_2\in E(H)$. Let $E_1$ and $E_2$ be two sets of edges in $G$ and $H$ respectively. Then $E_1 \times E_2$ is defined as $\{(x,y)(x',y') \mid xx' \in E_1 \text{ and } yy' \in E_2\}$. 
The \emph{strong product} of $G$ and $H$, denoted by $G\boxtimes H$, corresponds to the union $(G\Square H) \cup (G\times H)$; that is, $V(G\boxtimes H)=V(G)\times V(H)$ and two vertices $(u_1,v_1),(u_2,v_2)$ are adjacent if $u_1=u_2$ and $v_1v_2\in E(H)$, or $v_1=v_2$ and $u_1u_2\in E(G)$, or $u_1u_2 \in E(G)$ and $v_1v_2\in E(H)$.  

\section{The product contains the product of the odd minors}\label{sec:main}

In this section we prove Theorem~\ref{thm:strongmain}, and use it to prove Theorem~\ref{thm:main}. By the transitivity of the odd minor relation and the commutativity of the Cartesian and strong products, to prove Theorem~\ref{thm:strongmain}, it suffices to prove the following.

\begin{theorem}
    Let $H'$ be an odd minor of $H$. Then $G \star H'$ is an odd minor of $G \star H$, where $\star$ represents Cartesian product or strong product.
\end{theorem}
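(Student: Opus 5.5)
The idea is to lift the odd expansion of $H'$ in $H$ fibre by fibre along $G$, keeping the same witness colouring in every $H$-fibre. Concretely, since $H'$ is an odd minor of $H$, by the definition in Section~\ref{sec:prel} there are vertex-disjoint branch trees $(T_v)_{v\in V(H')}$ in $H$ and a witness colouring $c\colon V(H)\to\{1,2\}$. The colouring $c$ is proper on each $T_v$, and for every edge $vv'\in E(H')$ there is a monochromatic edge $a_{vv'}b_{vv'}$ of $H$ with $a_{vv'}\in V(T_v)$ and $b_{vv'}\in V(T_{v'})$. We will show that $G\star H$ is an odd expansion of $G\star H'$.

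For each vertex $(g,v)$ of $G\star H'$ we take as branch tree the copy $T_{(g,v)}=\{g\}\times T_v$ inside the fibre $\{g\}\times V(H)$. In both the Cartesian and the strong product, $(g,h)(g,h')$ is an edge exactly when $hh'\in E(H)$. Hence each $T_{(g,v)}$ is a tree isomorphic to $T_v$. These trees are pairwise vertex-disjoint, because the $T_v$ are disjoint and distinct $g$ give distinct fibres. We define the colouring $c'(g,h)=c(h)$ on $V(G\star H)$; vertices lying in no branch tree play no role and can be regarded as deleted. Then $c'$ is proper on every $T_{(g,v)}$ because $c$ is proper on $T_v$.

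It remains to find, for each edge of $G\star H'$, a monochromatic edge of $G\star H$ between the corresponding branch trees. We split into the types of edges.
\begin{enumerate}[(i)]
\item For $(g,v)(g,v')$ with $vv'\in E(H')$, we use $(g,a_{vv'})(g,b_{vv'})$. It is an edge of the product, and it is monochromatic under $c'$ since $c(a_{vv'})=c(b_{vv'})$.
\item For $(g,v)(g',v)$ with $gg'\in E(G)$, we pick any vertex $a\in V(T_v)$ and use $(g,a)(g',a)$. It is an edge in both products, and both ends receive colour $c(a)$.
\item In the strong product only, for $(g,v)(g',v')$ with $gg'\in E(G)$ and $vv'\in E(H')$, we use $(g,a_{vv'})(g',b_{vv'})$. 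It is an edge of $G\boxtimes H$ because both coordinates are adjacent, and it is again monochromatic.
\end{enumerate}
These three types exhaust the edges of $G\square H'$ and of $G\boxtimes H'$. So $G\star H$ is an odd expansion of $G\star H'$ with witness colouring $c'$.

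I do not expect a genuine obstacle. The only point needing care is case (ii), which works precisely because the colouring is copied identically in every fibre: the ``straight'' edges between fibres then join equally coloured vertices. This is also what fails for the direct product, where no within-fibre edges exist to carry the branch trees.
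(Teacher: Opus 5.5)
Your proof is correct and takes the same approach as the paper: copy each branch tree $T_v$ into every fibre $\{g\}\times V(H)$, colour by $c'(g,h)=c(h)$, and check the three edge types of $G\star H'$. Your cases (ii) and (iii) are stated more carefully than in the paper. There, case (ii) uses the edge $(u,v)(u',v)$ with $v\in V(H')$ rather than a vertex of $T_v$. In the strong case the paper uses $(u,y)(u,y')$, whereas $(u,y)(u',y')$ is the edge that actually joins $T'_{(u,v)}$ and $T'_{(u',v')}$.
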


\begin{proof}
    Let $\{T_v : v \in V(H')\}$ be a set of trees in $H$ that together with a witness $2$-colouring $c$ forms an odd expansion of $H'$ in $H$. For any $(u,v) \in V(G) \times V(H')$, consider the graph $T'_{(u,v)}$ with vertex set $\{(u,y) \mid y \in V(T_v)\}$ and edge set $\{(u,y)(u,y') \mid yy' \in E(T_v)\}$. Note that $T'_{(u,v)}$ is a tree in both the Cartesian and strong products. Define the $2$-colouring 
    \[c': \bigcup\limits_{(u,v) \in V(G) \times V(H')} T'_{(u,v)} \longrightarrow \{0,1\}\] given by $c'(u,y) := c(y)$. 

    Now we show the above mentioned trees together with the colouring $c'$ form an odd expansion of $G \star H'$ in $G \star H$. First we show that, $c'$ is a proper colouring in each $T'_{(u,v)}$, for  $(u,v) \in V(G) \times V(H')$. Let $(x,y)(x',y')$ be an edge in $T_{(u,v)}$. Then $u = x = x'$ and $yy'$ is an edge in $T_v$. Since $c$ is a proper colouring of $T_v$, from the definition of $c'$ one can ensure that $c'(u,y) \neq c'(u,y')$. Thus, $c'$ is a proper colouring in each $T'_{(u,v)}$.  
    
   Let $(u,v)(u',v')$ be an edge in $G \star H'$. We now need to show the existence in $G \star H$ of a monochromatic edge between the trees $T'_{(u,v)}$ and $T'_{(u',v')}$. Recall that $G \square H'$ is a subgraph  of $G \boxtimes H'$. We first deal with the case $(u,v)(u',v')\in E(G\square H)$, that is, either $u=u'$ and $vv' \in E(H')$ or $v = v'$ and $uu' \in E(G)$. If we have $u = u'$ and $vv' \in E(H')$, then there exists $hh'\in E(H)$ which joins $T_v$ and $T_{v'}$, and is  monochromatic with respect to $c$. Thus by the definition of $c'$ and the trees $T'_{(u,v)}$ and $T'_{(u,v')}$, we know that $(u,h)(u,h')$ is monochromatic with respect to $c'$ and joins  $T'_{(u,v)}$ and $T'_{(u,v')}$.  So we may assume that $v = v'$ and $uu' \in G$. Then the edge $(u,v)(u',v)$joins the trees $T'_{(u,v)}$ and $T'_{(u,v')}$ and is monochromatic under $c'$. In particular, this proves the result when  $\star$ represents the Cartesian product.

To end the proof, we deal with the case in which $(u,v)(u',v')\notin E(G\square H)$, that is, we have $uu' \in E(G)$ and $vv' \in E(H')$.  We know that, there exists a monochromatic edge (with respect to $c$) from $T_v$ to $T_{v'}$ say $yy'$. This tells us that $(u,y)(u,y')$ is monochromatic under $c'$. Since this edge joins $T'_{(u,v)}$ and $T'_{(u',v')}$, the result follows.
\end{proof}

Having proved Theorem~\ref{thm:strongmain} we can now prove Theorem~\ref{thm:main}.

\noindent
\textit{Proof of Theorem~\ref{thm:main}}
    For the Cartesian and strong products, the statement immediately follows from Theorem~\ref{thm:strongmain}. For the lexicographic product, recall that $G \boxtimes H$ is a subgraph of $G \Circle H$, for any two $G$ and $H$. Further, since $K_s \boxtimes K_t= K_s \Circle K_t=K_{st}$, we have  $\oddhad(G \Circle H)\ge\oddhad (G \boxtimes H)\ge\oddhad(K_s \boxtimes K_t)=st=\oddhad (K_s \Circle K_t)$.~\hfill $\square$

\section{An explicit bound for the Cartesian product}\label{sec:Cartesian}
 
In this section, we prove Theorem~\ref{thm:ourzelinka}, which actually follows directly from Theorem~\ref{thm:main} and the following.

\begin{theorem}\label{Theorem:CartesianComplete}

If $s\geq2$ and $t\geq 2$, then $\oddhad(K_s \Square K_t) \geq s+t-2$.
\end{theorem}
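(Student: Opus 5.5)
We build the odd expansion of $K_{s+t-2}$ in $K_s \Square K_t$ explicitly, in the spirit of Zelinka's construction. We write $V(K_s\Square K_t)=\{(i,j): 1\le i\le s,\ 1\le j\le t\}$. Two vertices are adjacent exactly when they agree in one coordinate, so every row $\{(i,j):1\le j\le t\}$ and every column $\{(i,j):1\le i\le s\}$ induces a clique. We use $(t-1)+(s-1)=s+t-2$ branch trees:
\begin{itemize}
\item a \emph{column tree} $C_j$ for each $j\in\{2,\dots,t\}$, with vertex set the whole column $j$, namely $\{(i,j):1\le i\le s\}$, and edge set the star centred at $(1,j)$ with leaves $(i,j)$, $2\le i\le s$;
\item a \emph{row tree} $R_i$ for each $i\in\{2,\dots,s\}$, consisting of the single vertex $(i,1)$.
\end{itemize}
The vertex $(1,1)$ is left unused. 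These trees are pairwise vertex-disjoint, since distinct columns are disjoint and the vertices $(i,1)$ lie in column $1$, which is not used by any column tree.

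The witness colouring $c$ assigns colour $0$ to each centre $(1,j)$, $j\ge 2$, and colour $1$ to every other vertex used: the leaves $(i,j)$ with $i,j\ge 2$ and the singletons $(i,1)$ with $i\ge 2$. Each column tree is a star whose centre has colour $0$ and whose leaves have colour $1$, so $c$ is proper on every branch tree; singletons are trivially properly coloured.

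It remains to exhibit a monochromatic edge between every pair of branch trees, using only vertices of colour $1$.
\begin{itemize}
\item For $C_j$ and $C_{j'}$ with $j\neq j'$, take the edge $(2,j)(2,j')$, which exists because the two vertices lie in the same row. Row $2$ exists because $s\ge 2$.
\item For $R_i$ and $R_{i'}$ with $i\neq i'$, take the edge $(i,1)(i',1)$, which exists because both vertices lie in column $1$.
\item For $R_i$ and $C_j$, take the edge $(i,1)(i,j)$, which exists because both vertices lie in row $i$. Here $(i,j)$ with $i,j\ge2$ is a leaf of $C_j$.
\end{itemize}
In each case both endpoints have colour $1$. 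Hence this is an odd expansion of $K_{s+t-2}$, and $\oddhad(K_s\Square K_t)\ge s+t-2$.

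The only potentially delicate points are the degenerate cases. If $s=2$ or $t=2$, one family of trees is a single tree (or, when $s=t=2$, each family is), and the corresponding pairwise condition is vacuous. The edges used between the two families still exist, so the argument goes through unchanged. No step is a real obstacle; the main idea is to absorb all the "crossing" vertices $(i,j)$ with $i,j\ge 2$ into the column trees, so that each row singleton meets every column tree.

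Theorem~\ref{thm:ourzelinka} then follows by combining this with Theorem~\ref{thm:main}.
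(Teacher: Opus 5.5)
Your proof is correct and is essentially the paper's construction with the roles of the two factors swapped. The paper uses the $t-1$ singletons $(u_1,v_k)$ and $s-1$ stars spanning the copies of $K_t$ indexed by $u_2,\dots,u_s$ (centres in the other colour), whereas you use $s-1$ singletons and $t-1$ stars spanning copies of $K_s$; the check of the monochromatic edges is the same three-case argument.
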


\begin{proof}
    Set $V(K_s) := \{u_1,u_2,\dots,u_s\}$ and $V(K_t) := \{v_1,v_2,\dots,v_t\}$  We are looking for a $K_{s+t-2}$ as an odd~minor in $K_s\Square K_t$.
    
    For all $i\in \{2,3,\dots,s\}$, let $S_{(u_i,v_t)}$ be a star of center $(u_i,v_t)$ and leaves $(u_i,v_j)$ for all $j\in \{1,2,\dots,t-1\}$. For each $k\in \{1,\dots s+t-2\}$, we define 
    $$Z_k = \begin{cases}
        (u_1,v_k) &\text{ if } k\in \{1,2,\dots,t-1\}, \\
        S_{(u_{k+2-t},v_t)} &\text{ if } k\in \{t,t+1,\dots,s+t-2\}.
    \end{cases}$$
For every vertex $(u,v)$ in $Z_1 \cup Z_2\cup \ldots \cup Z_{s+t-2}$, we define

$$C((u,v)) = \begin{cases}
    2 &\text{ if } (u,v) \text{ is the center of a star $S_{(u,v)}$,} \\
    1 &\text{ otherwise. }
\end{cases} $$
 See Figure \ref{fig:K_tK_s} for an example of the definition of the $Z_k$'s and the 2-colouring $C$.  

Clearly, $C$ induces a proper 2-colouring on each $Z_k \in \mathbf{Z}$. so it suffices to prove that there is a monochromatic edge between $Z_k$ and $Z_{k'}$ with $1\leq k <k' \leq s+t-2$. This follows from the next three cases. When $1\leq k <k' \leq t-1$, $Z_k$ and $Z_{k'}$ are the singletons $(u_1,v_k), (u_1,v_{k'})$ coloured with 1, hence there is a monochromatic edge between them. When $1\leq k\leq t-1$ and $t \leq k' \leq s+t-2$, $Z_k$ is the singleton $(u_1,v_k)$ with colour 1, and $Z_{k'}$ is a star containing $(u_{k+2-t},v_{k})$ as a leaf with colour 1. Thus the edge $(u_1,v_k)(u_{k+2-t},v_{k})$ is monochromatic. When $t\leq k <k' \leq s+t-2$, $Z_k$ and $Z_{k'}$ are stars with centers $(u_{k+2-t},v_t)$ and $ (u_{k'+2-t},v_t)$, respectively,
    which are coloured with 2 and hence these form a monochromatic edge.
\end{proof}

\begin{figure}[h!]
    \centering
    \includegraphics[scale = 1]{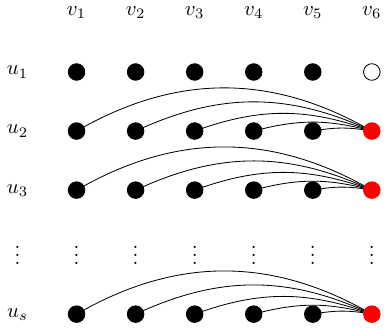}
    \caption{Trees $Z_1, Z_2, \ldots, Z_{s+4}$ and $C$ from the Theorem \ref{Theorem:CartesianComplete} in the case that $t=6$. The colour 1 is represent by black and the colour 2 by red. Note that $Z_1, \ldots, Z_6$ are vertices, the vertex $(u_1,v_6)$ is not used in any tree in $\mathbf{Z}$, and $Z_7, \ldots, Z_{s+4}$ are stars with 5 leaves. }
    \label{fig:K_tK_s}
\end{figure}

We apply Theorem \ref{Theorem:CartesianComplete}  to Hamming graphs. Given $d,q,\in \mathbb{N}$, the the \underline{Hamming graph} $H_t^d$ is the Cartesian product of $d$ complete graphs $K_t$. The next corollary follows by induction.

\begin{corollary} For all $d,q \in \mathbb{N}$, we have
    $\oddhad(H_n^d) \geq d(n-2)+2$.
\end{corollary}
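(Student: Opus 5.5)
Note first that the notation is garbled: the statement writes $H_n^d$ and quantifies over $d,q$. The intended claim is $\oddhad(H_n^d)\ge d(n-2)+2$ for all $d\ge 1$ and $n\ge 2$, where $H_n^d=K_n\Square\cdots\Square K_n$ ($d$ factors). The plan is induction on $d$, using Theorem~\ref{thm:main} for the Cartesian product together with Theorem~\ref{Theorem:CartesianComplete}. Equivalently, we can combine Theorem~\ref{thm:ourzelinka} with the identity $H_n^{d}=H_n^{d-1}\Square K_n$, which holds by associativity of the Cartesian product.

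For the base case $d=1$, we have $H_n^1=K_n$. The bound reads $\oddhad(K_n)\ge n$, which holds because $K_n$ is trivially an odd minor of itself: take singleton branch trees and the constant colouring, so every edge is monochromatic. In fact $\oddhad(K_n)=n$. For the inductive step, suppose $\oddhad(H_n^{d-1})=s\ge (d-1)(n-2)+2$, and note $\oddhad(K_n)=t=n$. Since $n\ge 2$, both $s\ge 2$ and $t\ge 2$ hold. Theorem~\ref{thm:ourzelinka} then gives
\[
\oddhad(H_n^d)=\oddhad(H_n^{d-1}\Square K_n)\ge s+n-2\ge (d-1)(n-2)+2+(n-2)=d(n-2)+2.
\]
If one prefers to avoid using the exact value $s$, one can instead apply Theorem~\ref{thm:strongmain}. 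Since $K_{s'}$ is an odd minor of $H_n^{d-1}$ for $s'=(d-1)(n-2)+2$, the product $K_{s'}\Square K_n$ is an odd minor of $H_n^d$. Theorem~\ref{Theorem:CartesianComplete} then yields an odd $K_{s'+n-2}$ inside it, and transitivity of the odd minor relation finishes the step.

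The only point needing care is the hypothesis $s,t\ge2$ in Theorem~\ref{thm:ourzelinka}. It holds since $s'\ge 2$ for all $d\ge1$ whenever $n\ge 2$. The degenerate case $n=1$ falls outside the intended range: there the claimed bound $2-d$ is trivially true anyway for $d\ge 1$, since $\oddhad\ge 1$ for any nonempty graph. No real obstacle is expected; the work is in the earlier theorems.
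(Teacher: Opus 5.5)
Your proof is correct and follows the paper's intended argument: induction on $d$ via $H_n^d=H_n^{d-1}\square K_n$, with base case $\oddhad(K_n)=n$ and inductive step given by Theorem~\ref{thm:ourzelinka} (equivalently, Theorem~\ref{thm:strongmain} combined with Theorem~\ref{Theorem:CartesianComplete} and transitivity). Your check that the hypothesis $s,t\ge 2$ holds whenever $n\ge 2$ is the one point that needs verifying, and you handle it correctly.
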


\section{Alternative bound for the lexicographic and strong products}\label{sec:lex}

In this section we prove Theorem~\ref{thm:maxdegree}.

Let $S_k$ be the star with $k$ leaves. Note that each star has Odd Hadwiger number $2$, and hence by Theorem \ref{thm:main} the product of two stars is at least $4$. In Lemma \ref{lemma:stars}, we give a lower bound for the Odd Hadwiger number of the product of two stars which depends on the maximum degree of these. This result immediately implies Theorem~\ref{thm:maxdegree}.

\begin{lemma}\label{lemma:stars}
    For any two integers $r,t\ge 1$ we have
    \begin{equation*}
        \oddhad(S_r \boxtimes S_t) \geq
        \begin{cases}
            r+1, \text{ if } r=t\\
            \min \{r,t\} +2, \text{ otherwise.}\\
        \end{cases}
    \end{equation*}  
\end{lemma}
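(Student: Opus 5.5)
We construct the odd expansion explicitly. By symmetry of the strong product we may assume $r\le t$. Write $a$ for the centre of $S_r$ and $a_1,\dots,a_r$ for its leaves, and $b$ for the centre of $S_t$ and $b_1,\dots,b_t$ for its leaves. The useful adjacencies in $S_r\boxtimes S_t$ are the following.
\begin{enumerate}[(i)]
\item $(a,b)$ is adjacent to every other vertex.
\item $(a_i,b)$ is adjacent to $(a,b_j)$ for all $i,j$, via a "diagonal" edge, since $a_ia\in E(S_r)$ and $bb_j\in E(S_t)$.
\item $(a_i,b)$, $(a_i,b_i)$, $(a,b_i)$ form a path, since $(a_i,b_i)$ is adjacent to both ends.
\end{enumerate}

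The branch trees and the witness colouring $c$ are as follows.
\begin{itemize}
\item $B_0=\{(a,b)\}$, with colour $1$.
\item For each $i\in\{1,\dots,r\}$, the path $B_i=(a_i,b)\,(a_i,b_i)\,(a,b_i)$. Its two ends get colour $1$ and its middle vertex $(a_i,b_i)$ gets colour $2$.
\item If $r<t$, additionally the singleton $B_{r+1}=\{(a,b_{r+1})\}$, with colour $1$. This vertex exists precisely because $t\ge r+1$.
\end{itemize}
These sets are pairwise vertex-disjoint: distinct $i$ use distinct leaves $a_i$ and $b_i$, and $B_{r+1}$ uses the unused leaf $b_{r+1}$. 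Each $B_i$ is a tree properly $2$-coloured by $c$.

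It remains to exhibit a monochromatic edge between every pair of branch trees. Every $B_i$ contains a vertex of colour $1$. By (i), $(a,b)$ is adjacent to it, which handles all pairs involving $B_0$. For $1\le i<j\le r$, the edge $(a_i,b)(a,b_j)$ from (ii) joins $B_i$ and $B_j$, and both endpoints have colour $1$. Finally, if $r<t$, the edge $(a_i,b)(a,b_{r+1})$ is again a diagonal edge as in (ii), with both ends of colour $1$, and it joins $B_i$ and $B_{r+1}$.

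Hence $S_r\boxtimes S_t$ is an odd expansion of $K_{r+1}$ when $r=t$, and of $K_{r+2}=K_{\min\{r,t\}+2}$ when $r<t$, which gives the lemma.

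The only delicate point is checking that no required edge is one that is missing from the strong product, such as $(a_i,b)(a_j,b)$ or $(a,b_i)(a,b_j)$. This is exactly why each pair $B_i,B_j$ is connected through the mixed pair $(a_i,b)$, $(a,b_j)$ rather than through two vertices sharing a coordinate.

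For Theorem~\ref{thm:maxdegree}, let $G$ and $H$ have maximum degrees $\Delta(G)$ and $\Delta(H)$. Then $G$ contains the star $S_{\Delta(G)}$ and $H$ contains $S_{\Delta(H)}$ as subgraphs, so $S_{\Delta(G)}\boxtimes S_{\Delta(H)}$ is a subgraph of $G\boxtimes H$. Applying the lemma with $r=\Delta(G)$ and $t=\Delta(H)$ gives the theorem.
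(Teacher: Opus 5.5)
Your proof is correct and is essentially the paper's construction: the same paths $(a_i,b)(a_i,b_i)(a,b_i)$ pairwise joined by the edges $(a_i,b)(a,b_j)$, the universal vertex $(a,b)$ as a singleton branch tree, and, when $r<t$, one extra singleton $(a,b_j)$ on an unused leaf of the larger star. The differences are cosmetic: the two colours are swapped, and you use the leaf $b_{r+1}$ where the paper uses $v_t$.
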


\begin{proof}
    Let $S_r$ and $S_t$ be the stars with central vertices $u_0$ and $v_0$, respectively. Suppose $\{\cacher{u_0,}u_1, \dots, u_r\}$ and $\{\cacher{v_0,}v_1, \dots, v_t\}$ are the sets of leaves in $S_r$ and $S_t$, respectively. Without loss of generality, we may assume that $t \geq r$. We first construct an odd expansion of $K_r$ in $S_r \boxtimes S_t$. That is, we construct a collection of $r$ vertex disjoint trees, say $\mathbf{Z} = \{Z_1,Z_2,\cdots,Z_r\}$ together with a 2-colouring $c$ on $V(Z_1) \cup V(Z_2) \cup \dots V(Z_r)$ such that $c$ is a proper 2-colouring on each tree in $\mathbf{Z}$ and there is a monochromatic edge between each  pair of distinct trees in $ \mathbf{Z}$. Then we extend this odd expansion by adding a single-vertex tree together with an appropriate colour. When $t>r$, we further expanding this odd expansion by adding another single-vertex tree with appropriate colouring.
    
    Consider the paths $Z_i :=(u_i,v_0),(u_i,v_i),(u_0,v_i)$, for all $1 \leq i \leq r$. For all $1 \leq i \leq r$, we define $c$ as follows: we colour the central vertex $(u_i,v_i)$ with colour $1$ and the vertices $(u_i,v_0)$ and $(u_0,v_i)$ with colour~$2$. This is a proper 2-colouring on $Z_i$. Further, note that for any $1 \leq i < i' \leq r$, the vertices $(u_i,v_0)$ and $(u_0,v_{i'})$ are adjacent. Since both had receive colour~2, this edge is monochromatic edge connecting $Z_i$ with $Z_{i'}$. Now we extend this collection by adding $(u_0,v_0)$ as a new tree and assign colour $2$ to $(u_0,v_0)$.  Since $(u_0,v_0)$ is adjacent to $(u_i,v_0)$, for all $1 \leq i \leq s$, this provides an odd expansion of~$K_{r+1}$. 
    
    We extend this odd expansion by one, when $t>r$.  Consider the vertex $(u_0,v_t)$ and colour it with colour $2$. For any $1 \leq i \leq r$, there exists an edge with endpoints $(u_i,v_0)$ and $(u_0,v_t)$, which is monochromatic under the extension of $c$. 
    Further, $(u_0,v_0)$ is adjacent to $(u_0,v_t)$. Hence, the product graph admits $K_{r+2}$ as an odd~minor.
\end{proof}

\section{Direct Product}\label{sec:direct}

While we were not able to extend our main result, Theorem~\ref{thm:main}, to the direct product, in this section we prove Theorem~\ref{thm:directbound} which gives a weaker bound for the odd Hadwiger number of the direct product of two graphs. We also prove a lower bound for the direct product of two complete graphs, which together with Theorem~\ref{thm:directbound} gives an explicit lower bound on the odd Hadwiger number of any two graphs.

It is well known that $\chi(G\times H)\le\min\{\chi(G), \chi(H)\}$. Moreover, a graph $G$ is bipartite if and only if $\oddhad(G)=2$. Thus we can focus on the direct product of graphs with odd Hadwiger number at least 3.

\subsection{Direct product of complete graphs}\label{sub:2Ks}

We start by studying $\oddhad(K_t \times K_3)$, as this is the basis for our argument when we take any two cliques. Moreover, in this case we get a tight bound.

\begin{theorem}\label{KtK_3}
   For $t\geq 6$, $\oddhad(K_t\times K_3)= t+2$
\end{theorem}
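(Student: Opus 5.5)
The plan is to prove the two inequalities $\oddhad(K_t\times K_3)\ge t+2$ and $\oddhad(K_t\times K_3)\le t+2$ separately. Throughout, write $V(K_t)=\{1,\dots,t\}$ and $V(K_3)=\{a,b,c\}$. Recall that $(i,x)$ and $(j,y)$ are adjacent exactly when $i\neq j$ and $x\neq y$. Two consequences will be used repeatedly: each layer $\{(i,x): i\in[t]\}$ is an independent set of size $t$, and the largest cliques of $K_t\times K_3$ are triangles of the form $(i,a),(j,b),(k,c)$ with $i,j,k$ distinct.

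\textbf{Lower bound.} I would give an explicit odd expansion of $K_{t+2}$ using almost all $3t$ vertices.

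1. The bulk of the model is $t-1$ branch trees that are single edges $P_i=\{(i,a),(\sigma(i),b)\}$, with $\sigma$ a fixed-point-free permutation, say a cyclic shift. In the witness colouring, $(i,a)$ gets colour $1$ and $(\sigma(i),b)$ gets colour $2$.
2. Two such edges $P_i,P_j$ are joined by the monochromatic edge $(i,a)(\sigma(j),b)$ only if the two endpoints have the same colour. So I will instead let the colours alternate with an index-dependent pattern. Concretely, I split the indices into two halves: for one half, layer $a$ gets colour $1$; for the other half, layer $a$ gets colour $2$.
3. I then check that every pair of edge-trees is joined by some monochromatic edge, using both cross-edges $(i,a)(\sigma(j),b)$ and $(j,a)(\sigma(i),b)$.
4. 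The layer-$c$ vertices are all adjacent to every vertex in layers $a$ and $b$ outside their own row. So three further branch sets can be formed from layer $c$: stars or singletons, coloured so that each meets every $P_i$ monochromatically. For instance, a vertex $(k,c)$ of colour $1$ sees a colour-$1$ vertex of every $P_i$ not in row $k$.
5. The hypothesis $t\ge 6$ is what guarantees enough spare rows. These let the layer-$c$ trees avoid the few conflicting rows and be made mutually adjacent, by giving each such tree a vertex in a second layer.

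If the bookkeeping fails, I would fall back on an inductive construction: a model of $K_{6+2}$ in $K_6\times K_3$ found by hand, extended by one branch set per added row of $K_t$.

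\textbf{Upper bound.} Suppose $K_k$ is an odd minor of $K_t\times K_3$ with branch trees $T_1,\dots,T_k$ and witness colouring $c$.

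1. Singleton trees are pairwise adjacent, so they form a clique. Hence there are at most $3$ of them, and they lie in distinct layers.
2. The remaining trees have at least $2$ vertices, which gives only $k\le (3t+3)/2$. This is too weak, so the argument must use the colouring.
3. The refinement I would try is as follows. Since every layer is independent, every edge of every tree goes between two layers. Consider the colour classes $C_1,C_2$ of $c$ restricted to the union of the trees. Every branch tree with at least $2$ vertices meets both $C_1$ and $C_2$. Every pair of trees needs a monochromatic edge, so inside $G[C_1]$ and inside $G[C_2]$ the trees' traces must together "cover" all $\binom k2$ pairs.
4. Each $G[C_\ell]$ is an induced subgraph of $K_t\times K_3$. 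I would bound how many pairs of trees it can serve by looking at its rows: within a row, vertices in different layers are non-adjacent. I expect this to give an inequality of the form $|C_1\cap \text{row}|+|C_2\cap\text{row}|\le 3$ summed over rows, combined with a covering condition forcing $k-3\le t-1$.

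\textbf{Main obstacle.} The main obstacle is this upper bound, specifically turning the "every pair has a monochromatic edge" condition into a sharp linear bound $k\le t+2$. The fallback there is a case analysis on the number of singleton trees ($0$ to $3$) and on the number of two-vertex trees, showing that in each case some row is exhausted.
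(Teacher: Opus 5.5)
There are genuine gaps in both directions. Your lower-bound construction fails for a structural reason, not a bookkeeping one. Each layer is independent, so two single-edge branch trees $P_i=\{(i,a),(\sigma(i),b)\}$ and $P_j=\{(j,a),(\sigma(j),b)\}$ can only be linked by $(i,a)(\sigma(j),b)$ or $(j,a)(\sigma(i),b)$. Since each tree is properly coloured, such a link is monochromatic only if $c(i,a)\neq c(j,a)$. Hence at most two edge-trees can join any fixed pair of layers. Your ``two halves'' colouring fails for every pair of indices in the same half, so with $t-1\ge 5$ edge-trees between layers $a$ and $b$, step~3 cannot succeed.

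The inductive fallback is unspecified, and its literal form is impossible. The count in the next paragraph shows that any odd model of $K_{t+2}$ in $K_t\times K_3$ uses all $3t$ vertices and has no branch tree with more than three vertices. The three vertices of a new row are pairwise non-adjacent, so a new branch set cannot simply be appended; the old trees would have to be rebuilt. The paper instead writes the model down directly for every $t\ge 6$. It uses six edge-trees, two between each pair of layers with opposite colourings, and $t-4$ three-vertex paths meeting each layer once (with a modified $Z_8$ when $t=6$). It then lists an explicit monochromatic edge for every pair.

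For the upper bound, steps~3--4 do not contain an argument: the row inequality is trivial and the covering condition is never derived. The missing key lemma is exactly the obstruction above. Let $S$ and $D$ be the numbers of branch trees with one and with two vertices; the observation above gives $D\le 6$. Singletons are pairwise joined by monochromatic edges, so they share a colour, say $1$. A singleton in layer $x$ is adjacent only to vertices outside layer $x$. It therefore has no monochromatic edge to an edge-tree whose endpoint in layer $x$ has colour $1$, because the other endpoint has colour $2$. So for each of the two layer pairs containing $x$, the singleton forbids one of the two admissible colour patterns. Different singletons lie in different layers and forbid different patterns, whence $D\le 6-2S$. All remaining trees have at least three vertices, so $k\le S+D+(3t-S-2D)/3=t+(D+2S)/3\le t+2$. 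Your suggested case analysis on $S$ and $D$ points in this direction, but without the bound $D\le 6-2S$ there is nothing to drive it.
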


\begin{proof}
Suppose $t\geq 6$, let $\{u_1,u_2,u_3,\dots,u_t\}$ be the vertices of $K_t$ and $v_1,v_2,v_3$ be the vertices of $K_3$. 

    \paragraph{Lower Bound:} We are looking for an odd expansion of $K_{t+2}$. If $t\geq 7$ we define trees $Z_1,Z_2,\ldots,Z_{t+2}$ according to Table \ref{table:trees1}. The third column of Table \ref{table:trees1} contains a partial description 
    of the 2-colouring $c$ of $K_t\times K_3$,
    which is proper on every $Z_i$: the colour of exactly one vertex of each tree $Z_i$ is prescribed, and the remaining vertices are coloured according to the given prescription, ensuring that each tree $Z_i$ receives a proper colouring. 

    \begin{table}[h!] 
        \centering
        \begin{tabular}{|c|c|c|}\hline
           $i$  &  $Z_i$  & $c$ is such that\\ \hline
           1  &$(u_1,v_1)(u_2,v_2)$  & $c((u_1,v_1)) = 1$ \\\hline
           2  &$(u_2,v_1)(u_3,v_2)$  & $c((u_2,v_1)) = 2$\\\hline
           3  &$(u_1,v_3)(u_3,v_1)$  & $c((u_1,v_3)) = 1$\\\hline
           4  &$(u_3,v_3)(u_4,v_1)$  & $c((u_4,v_1)) = 1$\\\hline
           5  &$(u_4,v_2)(u_5,v_3)$  & $c((u_5,v_3)) = 1$\\\hline
           6  &$(u_5,v_2)(u_6,v_3)$  & $c((u_5,v_2)) = 2$\\\hline
           7  &$(u_{1},v_{2})  (u_{2},v_{3})  (u_{5},v_{1})$  & $c((u_{2},v_{3})) = 1$\\\hline
           8  &$(u_{7},v_1)(u_{4},v_3)(u_6,v_2)$  & $c((u_{4},v_3)) = 2$ \\\hline
          $ 9\leq i \leq t+1,~i$ odd &$(u_{i-1},v_2)(u_{i-3},v_1)(u_{i-2},v_3)$  & $c((u_{i-3},v_1)) = 2$\\\hline
           $ 10\leq i \leq t+1,i$ even  &$ (u_{i-1},v_1)(u_{i-3},v_2)(u_{i-2},v_3)$  & $c((u_{i-3},v_2)) = 2$ \\\hline
           $t+2$ odd&$(u_{t},v_2)(u_{t-1},v_1)(u_{t},v_3)$& $c((u_{t-1},v_1)) = 2$\\\hline
           $t+2$ even & $(u_t,v_1)(u_{t-1},v_2)(u_t,v_3)$ & $c((u_{t-1},v_2)) = 2$\\\hline
        \end{tabular}
        \caption{Trees $Z_1,\dots , Z_{t+2}$ (all of them paths) and a partial description of the colouring $c$. }
        \label{table:trees1}
    \end{table}

For the special case of $t=6$, the trees $Z_1, Z_2, \ldots, Z_7$ are defined according to Table \ref{table:trees1}, and tree $Z_8$ is the path $(u_6,v_1)(u_4,v_3)(u_6,v_2)$ with $c((u_4,v_3))=2$ (See  Figure \ref{fig:ktk3}).

    \begin{figure}[H]
  \centering
  \begin{minipage}[t]{0.4\textwidth}
    \centering
    \includegraphics[scale=1]{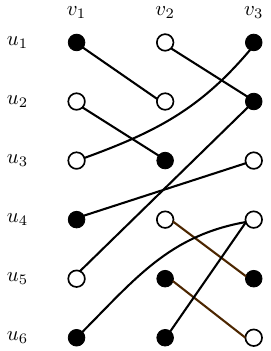}
    \caption{Case $t=6$. Recall that $Z_8$ is defined as $(u_6,v_1)(u_4,v_3)(u_6,v_2)$.}
  \end{minipage}
  \hspace{0.1\textwidth} 
  \begin{minipage}[t]{0.4\textwidth}
    \centering
    \includegraphics[scale=1]{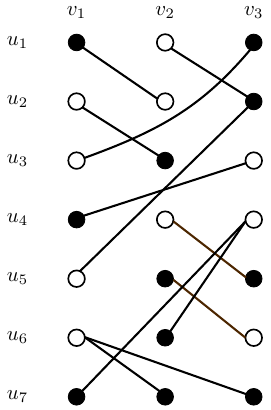}
    \caption{Case $t=7$. Tree $Z_9$ is obtained from the row ``$t+2$ odd'' in Table \ref{table:trees1}. }
  \end{minipage}

        \label{fig:ktk3}
    \end{figure}  

We now establish the existence of  a monochromatic edge between each pair in $Z_1,\dots, Z_{t+2}$.

First note that for trees $Z_i$, $Z_j$ with  $i,j \in \{9,\dots , t+2\}$, if $i, j$ are both odd or both even, then there is a monochromatic edge (with both vertices coloured 1) between end vertices of the paths $Z_i$ and $ Z_j$. If $i$, $j$ are of distinct parity, then there is a monochromatic edge (with both vertices coloured 2) between the inner vertices of $Z_i$ and $Z_j$.

Let us now deal with pairs of trees in $Z_1, \ldots, Z_8$. In Table 2 we give a monochromatic edge joining every such pair of trees. We note that for the case $t=6$, the same edges described in Table \ref{table:mono1-8} witness the monochromatic edges between trees $Z_1, \ldots, Z_7, Z_8$  except for the connection between $Z_8$ and $Z_6$ since vertex $(u_7,v_1)$ does not exists in $K_6 \times K_3$, but in this case, we consider the edge $(u_5,v_2)(u_6,v_1)$, which is monochromatic since both vertices are coloured 1. 

 \begin{table}[h!] 
        \centering
        \begin{tabular}{|c|c|c|c|c|c|c|c|}\hline
             &  $Z_2$  & $Z_3$ & $Z_4$ & $Z_5$ & $Z_6$ & $Z_7$ & $Z_8$ \\ \hline
           $Z_1$  &$(1,1)(3,2)$ & $(2,2)(3,1)$ &$(2,2)(3,3)$&$(1,1)(5,3)$&$(1,1)(5,2)$&$(1,1)(2,3)$&$(1,1)(6,2)$\\\hline
           $Z_2$ &- &$(3,2)(1,3)$ &$(2,2)(3,3)$  &$(2,2)(4,2)$&$(2,2)(6,3)$&$(2,2)(1,2)$&$(2,2)(4,3)$\\\hline
           $Z_3$   & - & - &$(1,3)(4,1)$&$(3,1)(4,2)$&$(1,3)(5,2)$& $(3,1)(1,2)$ &$(1,3)(6,2)$\\\hline
           $Z_4$  & - & - & - &$(4,1)(5,3)$ & $(4,1)(5,2)$ & $(4,1)(2,3)$ & $(4,1)(6,2)$\\\hline
           $Z_5$  & - & - & - & - & $(4,2)(6,3)$ &$(4,2)(5,1)$& $(5,3)(6,2)$ \\\hline
           $Z_6$  & - & - & - & - & - &$(6,3)(5,1)$ & $(5,2)(7,1)$\\\hline
           $Z_7$  & - & - & - & - & - &- & $(5,1)(4,3)$\\\hline
          
        \end{tabular}
        \caption{Monochromatic edges between trees $Z_1, \ldots, Z_8$. In the table, vertex $(u_i,v_j)$ is denoted by $(i,j)$.      
        In the case  $t=6$, instead of $(5,2)(7,1)$, the edge $(5,2)(6,1)$ is used.}
        \label{table:mono1-8}
    \end{table}

Now, note that each $Z_1, Z_4$ contains a vertex of the form $(u_i,v_1)$, for some $i\leq 4$, which is coloured 1, and that each $Z_2, Z_6, Z_8$ contains a vertex of the form $(u_i, v_2)$, for some $i\leq 6$, again coloured~1. Furthermore, each path $Z_9, \ldots, Z_{t+2}$ has an end vertex $(u_j, v_3)$ for some $j\geq 7$ coloured~1.  Thus, there are monochromatic edges between each $Z_1, Z_2, Z_4, Z_6, Z_8$, and each of the paths $Z_9, \ldots, Z_{t+2}$.

 We are only missing the monochromatic edges joining $Z_3, Z_5, Z_7$ to the paths $Z_9, \ldots, Z_{t+2}$. Each of $Z_3, Z_5, Z_7$ contains vertex of the form $(u_i,v_3)$, with $i\le 5$, which is coloured 1, and each tree in $Z_9, \ldots, Z_{t+2}$ contains either a vertex of the form $(u_j, v_1)$, or $(u_j, v_2)$ with $j\geq 8$ coloured~1 as well. We conclude that $K_t\times K_3$ contains an odd expansion of $K_{t+2}$, as desired.

\paragraph{Upper Bound:} 
Consider a largest odd expansion of a complete graph in $K_t \times K_3$. Let $D$ denote the number of trees isomorphic to $K_2$, and let $S$ denote the number of singletons used in the expansion. It is not hard to see that we have $S\leq 3$. Since every remaining tree in the expansion must contain at least three vertices, the total number of trees is at most
\begin{equation}
    \frac{3t - 2D - S}{3} + D + S. \label{nombre:comp}
\end{equation}
This, together with the following claim, will easily give the desired upper bound.

    \begin{claim} We have $D\leq 6-2S$.\label{claim:double}
    \end{claim}
    \begin{claimproof}
Let $V(K_3)=\{v_1,v_2,v_3\}$. 
We classify the edges of $K_t \times K_3$ according to the second coordinate of its endpoints: those of the form $(u,v_1)(u',v_2)$, or the form $(u,v_2)(u',v_3)$, or the form $(u,v_1)(u',v_3)$, with $u,u' \in V(K_t)$. 
We refer to these edges as type $1$, $2$, or $3$, respectively.
Note that there are at most two edges of each type in $D$, since otherwise there would exist a pair of trees in $D$ with no monochromatic connection. In particular, $D \leq 6$.

Without loss of generality assume that the singletons in $S$ are all coloured with colour $1$. Consider a singleton in $S$ of the form $(u,v_i)$, with $i \in \{1,2,3\}$; first suppose $i=1$.  Note that having $(u,v_1)\in S$, tells us that no edge in $D$ can have an endpoint with second coordinate $v_1$ and colour 1. Thus $(u,v_1)\in S$ forbids the existence of an edge of type $1$ and an edge of type $3$. The same argument holds, when $i=2,3$. Moreover, since the sets of edges forbidden by distinct singletons in $S$ are pairwise disjoint, we conclude that $D \leq 6 - 2S.$
\end{claimproof}


Evaluating the bound from Claim \ref{claim:double} in \eqref{nombre:comp}, we obtain 
$$\frac{3t - 2D - S}{3} + D + S \leq t+2.$$
which gives the required bound for the total number of trees in the odd expansion.
\end{proof}

Using $\lfloor s/3 \rfloor$ pairwise disjoint triangles in $K_s$, we extend the result of Lemma~\ref{KtK_3} as follows. 

\begin{theorem}
    \label{thm:complete_direct}
For each $t\geq 4$ and $s\geq 3$, we have $\oddhad(K_t\times K_s)\geq t\lfloor s/3\rfloor$. 
\end{theorem}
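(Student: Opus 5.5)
The plan is to partition $V(K_s)$ into $k:=\lfloor s/3\rfloor$ pairwise disjoint triangles $T_1,\dots,T_k$ (discarding the at most two remaining vertices). Inside each copy $K_t\times T_j\cong K_t\times K_3$ I build $t$ vertex-disjoint branch trees with a witness colouring. The trees are designed so that the monochromatic edges \emph{between} different copies come for free. This gives $tk$ branch trees in total. Theorem~\ref{KtK_3} itself is not used directly, since it needs $t\ge 6$ and its trees are too irregular. Instead I use a uniform and simpler construction, at the cost of losing the $+2$.

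Fix a triangle $T_j=\{w_1,w_2,w_3\}$ and write $V(K_t)=\{u_0,\dots,u_{t-1}\}$, with indices taken mod $t$. Choose a shift $a\in\{2,3\}$ with $a\not\equiv 0,1$ and $2a\not\equiv 0 \pmod t$. Take $a=2$ if $t\ge 5$ and $a=3$ if $t=4$; this is where $t\ge4$ is used. For $0\le i\le t-1$ let $Z^j_i$ be the path
\[
(u_i,w_1)\,(u_{i+1},w_2)\,(u_{i+a},w_3).
\]
It is a path in $K_t\times K_3$ because $u_{i+1}\ne u_i$ and $u_{i+1}\ne u_{i+a}$. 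For fixed $j$ the paths are disjoint, since each layer $w_\ell$ is hit by the bijection $i\mapsto i$, $i+1$ or $i+a$ respectively. Colour both ends with colour $1$ and the middle with colour $2$, so the colouring is proper on each path.

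\emph{Within a copy.} For $i\ne i'$, the edge $(u_i,w_1)(u_{i'+a},w_3)$ is monochromatic of colour $1$ whenever $i\ne i'+a$. Otherwise the edge $(u_{i'},w_1)(u_{i+a},w_3)$ works whenever $i'\ne i+a$. Both conditions fail only if $2a\equiv 0\pmod t$, which the choice of $a$ excludes.

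\emph{Across copies.} For $j\ne j'$, every pair $(u,w)$, $(u',w')$ with $w\in T_j$, $w'\in T_{j'}$ and $u\ne u'$ is adjacent, because $w\ne w'$ in $K_s$. Each path $Z^j_i$ contains two colour-$1$ vertices with distinct first coordinates $u_i$ and $u_{i+a}$ (as $a\not\equiv 0$). So, given any colour-$1$ vertex $(u,w)$ of $Z^j_i$, at least one of the two colour-$1$ vertices of $Z^{j'}_{i'}$ has first coordinate different from $u$, and that gives a monochromatic edge.

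Hence the $tk$ paths form an odd expansion of $K_{t\lfloor s/3\rfloor}$. The only delicate step is choosing the shift so that the within-copy pairs $i$ and $i\pm a$ are still joined; this is why $t=4$ needs $a=3$ rather than $a=2$, and I would double-check that small case by hand.
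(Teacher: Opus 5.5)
Your proof is correct, and it follows a different and noticeably cleaner route than the paper's. Both arguments split $K_s$ into $\lfloor s/3\rfloor$ disjoint triangles and obtain $t$ branch trees per triangle on average, but the paper's trees are irregular:
\begin{itemize}
\item in the first triangle it uses two singletons $(u_1,v_1),(u_3,v_3)$, a two-vertex path and $t-3$ three-vertex paths;
\item in each later triangle it places only $t-2$ paths, and makes up the difference with two paths $Z_{1,\ell},Z_{2,\ell}$ that straddle the $(\ell-1)$-th and $\ell$-th triangles;
\item the monochromatic edges are then checked by a case analysis on whether the middle vertices of two paths share a coordinate.
\end{itemize}
That packing uses almost every vertex of $K_t\times K_s$, but it gains nothing over the count $t\lfloor s/3\rfloor$. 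Your cyclic-shift paths $(u_i,w_1)(u_{i+1},w_2)(u_{i+a},w_3)$ stay inside one triangle. Cross-triangle pairs are then settled by one observation: the two colour-$1$ ends of each path have distinct $K_t$-coordinates. Within-triangle pairs reduce to the condition $2a\not\equiv 0 \pmod t$. So the whole verification comes down to three congruence conditions on $a$. The case $t=4$, $a=3$ that you flagged does check out, since $2a\equiv 2\pmod 4$.

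One small correction to your commentary: the choice of $a$ does not need $t\ge 4$. For $t=3$ the shift $a=2$ satisfies $a\not\equiv 0,1$ and $2a\equiv 1\not\equiv 0\pmod 3$. So your argument actually gives the bound for all $t\ge 3$. The paper's hypothesis $t\ge 4$ comes from its own construction, where the path $Z_{4,1}$ uses $u_4$.
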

\begin{proof} As it suffices, we only consider the case $s\equiv 0~(\rm mod~3)$.
    Let $\{u_1, u_2, \ldots,u_t\}$ and $\{v_1,v_2,\ldots,v_s\}$ be the vertex sets of $K_t$ and $K_s$, respectively.  
    We decompose $K_s$ into $s/3$ triangles, as follows.  For each $\ell \in \{1,2,\dots,s/3\},j\in\{1,2,3\}$, denote $v_{j}^\ell:=v_{j+3(\ell-1)}$. That is, $v_1^\ell,v_2^\ell,v_3^\ell$ are the vertices in the $\ell$-th triangle in $K_s$.
    Our strategy is as follows. First, we construct an odd expansion of $K_t$ in the graph $K_t \times K_s[v_1^1,v_2^1,v_3^1]$. Then, for each $\ell \geq 2$, we construct an odd expansion of $K_{t-2}$ in the graph $K_t \times K_s[v_1^\ell,v_2^\ell,v_3^\ell]$. Finally, for each $2 \leq \ell \leq s/3$, we construct two additional trees that take vertices in both the $(\ell-1)$-th triangle and the $\ell$-th triangle.

    We construct the $t s/3$ trees as follows. See Figure \ref{odd_min_k6k6} for an example.
First, for $\ell=1$ the trees are defined as follows
    $$Z_{i,1} = \begin{cases}
        (u_i,v_i) &\text{ for } i\in \{1,3\} \\
        (u_3,v_1)(u_2,v_2)&\text{ for } i=2\\
        (u_2,v_1)(u_1,v_2)(u_4,v_3)&\text{ for } i=4\\
        (u_{i-1},v_{1})(u_{i-2},v_2)(u_{i},v_3) &\text{ for } 5\le i \le t
    \end{cases}$$
Furthermore, for each $3\le i \le t$  and  $2 \le \ell \le  s/3$, we define

$$Z_{i,\ell}= (u_{i-1},v_1^\ell)(u_{i-2},v_2^\ell)(u_i,v_3^\ell),$$

and, for $2 \le \ell\le \lfloor s/3\rfloor$, we define

$$Z_{1, \ell}=(u_{t-1},v_2^{\ell-1})(u_t,v_1^{\ell-1})(u_1,v_3^\ell)\qquad
\text{and} \qquad Z_{2,\ell}=(u_t,v_2^{\ell-1})(u_1,v_1^\ell)(u_2,v_3^\ell).$$

 The way the trees $Z_{i,\ell}$ are defined ensures that they are pairwise disjoint.
  We define a  2-colouring on each $V(Z_{i,\ell})$ by taking $(u,v)\in V(Z_{i,\ell})$ and assigning the following 

    $$c((u,v)) = \begin{cases}
        1 \text{ if  }(u,v) \text{ has degree at most 1 and }Z_{i,\ell}\ne Z_{2,1}\\
        1 \text{ if  }(u,v) = (u_2,v_2)\\
        2 \text{ if  }(u,v) = (u_3,v_1)\\
        2 \text{ otherwise }
    \end{cases}$$

 Due to the definition, the colouring $c$ is a proper 2-colouring on each $Z_{i,\ell}$.  
 
\begin{figure}[h!]
    \centering
    \includegraphics[scale=1]{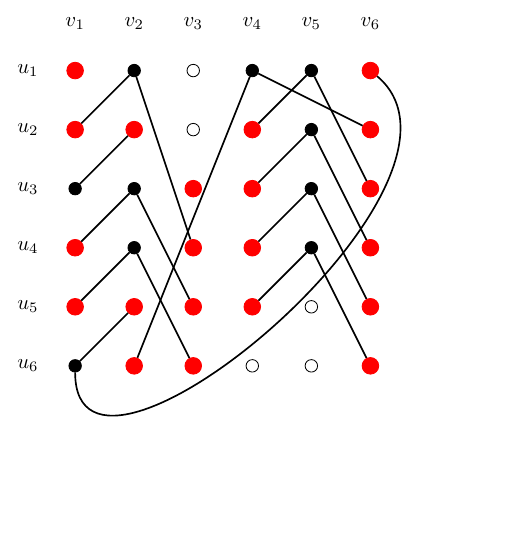} \vspace{-1.5cm}
    \caption{$K_{12}$ as an odd~minor of the graph $K_6\times K_6$. The trees and the 2-colouring are highlighted, where red represents colour 1 and white vertices are not used in the construction of the odd minor. There are two singletons as trees: $(u_1,v_1)$ and $(u_3,v_3)$. }
    \label{odd_min_k6k6}
\end{figure}

It remains to guarantee the existence of a monochromatic edge joining each pair of trees. Let $\mathbf{Z}$ be the union of all the trees $Z_{i,\ell}$. First, note that the two singletons $(u_1,v_1)$, $(u_3,v_3)$  are adjacent, and both are connected to the end vertex $(u_2,v_2)$ of $Z_{2,1}$, which is coloured with $1$. Furthermore, for each $i \in \{1,2,3\}$, the vertex $(u_i,v_i)$ is adjacent to at least one of the end vertices of every $3$-path (coloured with~1 by definition of $c$) in $\mathbf{Z} \setminus \{Z_{1,1}, Z_{2,1}, Z_{3,1}\},$ with the exception of the path $Z_{2,2} = (u_t,v_2)(u_1,v_1^2)(u_2,v_3^2)$. (This is because for every path  $Z\in \mathbf{Z} \setminus \{Z_{1,1}, Z_{2,1}, Z_{2,2}, Z_{3,1}\},$ and every $i\in \{1,2,3\}$,
at least one of the end points of $Z$ does not contain any of  $u_i=u_i^1$  and $v_i=v_i^1$, which is enough to guarantee a monochromatic edge between $(u_i,v_i)$ and $Z$.)
While both $(u_1,v_1)$ and $(u_3,v_3)$ are adjacent to both endpoints of $Z_{2,2}$, but $(u_2,v_2)$ is not adjacent to either endpoint. Yet, we have the monochromatic edge $(u_3,v_1)(u_1,v_1^2)$ joining $Z_{2,1}$ and $Z_{2,2}$.

We now argue that  every pair of 3-paths
$Z, Z'\in \mathbf{Z} \setminus \{Z_{1,1}, Z_{2,1}, Z_{3,1}\}$ is joined by a monochromatic edge.
Note that for every such pair with internal vertices $(u,v), (u',v')$, respectively, satisfying $u\ne u'$ and $v\ne v'$, the edge $(u,v), (u',v')$ exists and it is monochromatic (both vertices coloured 2). Thus, we need to study the situation when 
 $u=u'$ or $v=v'$.

  If we have $v=v'$, then for some $\ell\geq 1$ we either have $v=v_1^{\ell}$ or $v=v_2^{\ell}$. If $v=v_1^{\ell}$, then we have $Z=Z_{1,\ell+1}$ and $Z'=Z_{2,\ell}$, and the edge  $(u_{t-1},v_2^{\ell})(u_2,v_3^\ell)$ is monochromatic and joins $Z$ and $Z'$. Otherwise $Z,Z'$ have internal vertices $(u_i, v_2^{\ell})$,  $(u_j, v_2^{\ell})$, with $i<j$, and are joined by the edge $(u_{i+1}, v_1^{\ell})(u_{j+2}, v_3^{\ell})$  which is monochromatic.

     Otherwise we have  $u=u'$. If $u=u_i \notin \{u_1,u_t\}$, then we have $Z=Z_{i+2,\ell}$ and $Z'=Z_{i+2,\ell'}$ for distinct $\ell, \ell'$. In this case, 
      the edge $(u_{i+1}, v_1^{\ell})(u_{i+2}, v_3^{\ell'})$ is monochromatic and joins $Z$ and $Z'$. If $u=u_t$, then $Z=Z_{1,\ell}$ and $Z'=Z_{1,\ell'}$ for distinct $\ell, \ell'$. Here the edge $(u_{i+1}, v_1^{\ell})(u_{i+2}, v_3^{\ell'})$ is monochromatic and joins $Z$ and $Z'$. 
      
      Finally, if $u=u_1$, we first assume that none of $Z,Z'$ are of the form $Z_{2,\ell}$. Then for distinct $\ell,\ell'$, the path $Z$ has $(u_2,v_1^\ell)$ as an endpoint, while $Z'$ has $(u_2,v_1^{\ell'})$ as an endpoint. The vertex $(u_2,v_1^\ell)$ forms a monochromatic edge with the other endpoint of $Z'$. So we can take $Z=Z_{2,\ell}$ for some $\ell\ge 2$. In this case, $Z'$ has an endpoint of the form $(u_2,v_1^{\ell'})$ or of the form $(u_2,v_3^{\ell'})$. This endpoint forms a monochromatic edge with the endpoint $(u_t,v_2^\ell)$ of $Z.$ The result follows.
\end{proof}

\subsection{Direct product of two general graphs}
\label{sub:2graphs}

In this section we prove Theorem~\ref{thm:directbound}, for which we need the following lemma. We say an expansion is \underline{inflated} if none of its branch trees is a singleton. 


\begin{lemma}\label{arbre_dans_prod}
Let $S$ and $T$ be two trees which are not singletons, then $S \times T$ has at most two components.
\end{lemma}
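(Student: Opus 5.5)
Both trees are connected and bipartite, so the plan is to use bipartitions to split $S\times T$ into two pieces and then show each piece is connected. Fix proper $2$-colourings $\sigma\colon V(S)\to\{0,1\}$ and $\tau\colon V(T)\to\{0,1\}$. Every edge $(x,y)(x',y')$ of $S\times T$ has $xx'\in E(S)$ and $yy'\in E(T)$. Hence both coordinates change colour along an edge, and the quantity $\sigma(x)+\tau(y) \pmod 2$ is preserved. So $V(S)\times V(T)$ splits into the two classes $A_0=\{(x,y):\sigma(x)=\tau(y)\}$ and $A_1=\{(x,y):\sigma(x)\neq\tau(y)\}$, and no edge joins them. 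It therefore suffices to show that each of $A_0$ and $A_1$ induces a connected subgraph; each is nonempty because $S$ and $T$ each have both colours.

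Take two vertices $(x,y)$ and $(x',y')$ in the same class $A_\epsilon$. Since $S$ and $T$ are connected, there are walks $x=a_0,a_1,\dots,a_m=x'$ in $S$ and $y=b_0,b_1,\dots,b_n=y'$ in $T$. The walk lengths satisfy $m\equiv \sigma(x)+\sigma(x')$ and $n\equiv\tau(y)+\tau(y')\pmod 2$. As both vertices lie in $A_\epsilon$, we have $\sigma(x)+\tau(y)\equiv\sigma(x')+\tau(y')$, so $m\equiv n\pmod 2$.

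Next, equalise the lengths. Because $S$ and $T$ are not singletons, every vertex has a neighbour. Therefore a walk can be lengthened by $2$ by going back and forth along an edge at its end. Doing this to the shorter walk produces walks of a common length $N$. The sequence $(a_k,b_k)_{k=0}^N$ is then a walk in $S\times T$ from $(x,y)$ to $(x',y')$, since consecutive pairs are adjacent in both coordinates. So each class is connected, and $S\times T$ has at most two components (exactly two, in fact).

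The only delicate point is the parity matching together with the padding step. Padding is exactly where the hypothesis that neither tree is a singleton is needed: if, say, $S$ were a single vertex, then $S\times T$ would have no edges at all and the statement would fail.
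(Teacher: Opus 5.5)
Your proof is correct and follows the paper's approach: split $S\times T$ into the two parity classes given by the proper $2$-colourings, observe that no edge crosses between them, and connect two vertices of the same class by pairing an $S$-walk with a $T$-walk of the same parity. Your explicit padding step is slightly more careful than the paper's assertion that $P\times P'$ contains the required path. That assertion fails when one of the paths is trivial (e.g.\ $u=u'$ but $v\neq v'$, where $P\times P'$ has no edges), and that case needs exactly the back-and-forth detour you describe.
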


\begin{proof}

Set $G = S\times T$, and let $c_S$ and $c_T$ be  proper 2-colourings of $S$ and $T$, respectively. Define $A = \{(u,v) \in V(G): (c_S(u), c_T(v)) \in \{(1,1),(2,2)\} \}$ and $B=\{(u,v) \in V(G): (c_S(u), c_T(v)) \in \{(1,2),(2,1)\} \}$. Let us prove the connectivity in $G[A]$; the same proof works for $G[B]$. 
    
    Let us find a path joining an arbitrary pair vertices $(u,v), (v',u')\in A$ . Suppose that $(c(u),c(v)) = (i,i)$ and $(c(u'),c(v')) = (j,j)$ with $i,j\in \{1,2\}$, where $i=j$ is possible.  Let $P$ be the path joining $u$ and $u'$ in $S$ and $P'$ the $(v,v')$-path in $T$. Clearly, the parity of these paths is the same, which is enough for $P\times P'\subseteq S\times T$ to contain a path joining $(u,v)$ and $(u',v')$.
\end{proof}

The next result implies Theorem~\ref{thm:directbound} because every odd expansion of $K_s$ contains an inflated odd expansion of $K_{\lfloor s/2\rfloor}$.

\begin{theorem}
    Let $s,t\geq3$ be integers and $H$ a graph with $\oddhad(H)=3t$. 
If $G$ has an inflated expansion of $K_s$, then we have $\oddhad(G\times H)\geq \oddhad(K_s\times K_t)$.

\end{theorem}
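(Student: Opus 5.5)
The plan is to build, inside $G\times H$, a family of pairwise disjoint connected subgraphs $X_{(a,b)}$, $a\in V(K_s)$, $b\in V(K_t)$, each with a proper $2$-colouring, such that whenever $(a,b)(a',b')\in E(K_s\times K_t)$ there is a monochromatic edge of $G\times H$ between $X_{(a,b)}$ and $X_{(a',b')}$. Call such a family an \emph{odd model} of $K_s\times K_t$. Given an odd model, take any odd expansion of $K_r$ in $K_s\times K_t$, with $r=\oddhad(K_s\times K_t)$. Replace each of its branch trees by the union of the corresponding $X_{(a,b)}$ together with one monochromatic connecting edge per tree edge. One then flips colours on whole blocks $X_{(a,b)}$ so that these connecting edges become bichromatic inside a branch set, while the required monochromatic edges between branch sets survive. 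This is the usual transitivity argument for odd minors, and it yields $\oddhad(G\times H)\ge r$. So everything reduces to constructing the odd model.

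Let $(S_a)_{a\in V(K_s)}$ be the inflated expansion of $K_s$ in $G$. Fix any proper $2$-colouring $c_a$ of each $S_a$, which exists since every $S_a$ is a tree with at least one edge. For every pair $a\ne a'$, fix one edge $x_{aa'}x_{a'a}$ of $G$ joining $S_a$ to $S_{a'}$. Let $(T_1,\dots,T_{3t})$ together with the witness colouring $c$ be an odd expansion of $K_{3t}$ in $H$, and group the trees into triples $\mathcal T_b=\{T_{3b-2},T_{3b-1},T_{3b}\}$, one triple for each $b\in V(K_t)$.

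For a tree $T$ from a triple, Lemma~\ref{arbre_dans_prod} says that $S_a\times T$ has exactly two components. One component consists of the vertices $(x,y)$ with $c_a(x)+c(y)$ even, the other of those with $c_a(x)+c(y)$ odd. The colouring $c'(x,y):=c(y)$ is proper on $S_a\times T$, because every edge of that graph changes the second coordinate along an edge of $T$. Moreover, if $yy'$ is a monochromatic edge of $H$ joining two trees, then $(x_{aa'},y)(x_{a'a},y')$ is an edge of $G\times H$ that is monochromatic under $c'$. The candidate is therefore to let $X_{(a,b)}$ be one parity component of $S_a\times T$ for a suitable $T\in\mathcal T_b$, coloured by $c'$. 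Disjointness is automatic: for distinct $b$ the trees lie in different triples, and for distinct $a$ the trees $S_a$ are disjoint.

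The main obstacle is the parity bookkeeping. The edge $(x_{aa'},y)(x_{a'a},y')$ must land in the two chosen components. Since $c(y)=c(y')$, this forces the parities chosen for $(a,b)$ and $(a',b')$ to differ by exactly $\delta_{aa'}:=c_a(x_{aa'})+c_{a'}(x_{a'a}) \bmod 2$. Because $G$ only provides an ordinary, not odd, expansion, we have no control over $\delta_{aa'}$. I would use the three trees of each triple to absorb this defect, and I would first try the following. The three trees of $\mathcal T_b$ are pairwise joined by monochromatic edges, so their union $U_b$ with these three edges is connected and contains an odd cycle. Hence $S_a\times U_b$ is connected, and its vertex set contains vertices of both parities over each tree. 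Concretely, I would take $X_{(a,b)}$ to be the union of the even component over $T_{3b-2}$, the odd component over $T_{3b-1}$, and one component over $T_{3b}$, joined along the product edges lying over the triangle edges. Then $X_{(a,b)}$ has representatives of both parities available, so for each pair $(a,b),(a',b')$ a monochromatic edge of the right parity can be found between suitable trees of $\mathcal T_b$ and $\mathcal T_{b'}$.

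What must still be checked is that the joining edges can be made bichromatic, by recolouring, via $c(y)+1$, the block of $X_{(a,b)}$ lying over one of the trees, without destroying the needed monochromatic connections to the other blocks. This requires a case analysis on which tree of each triple carries the recoloured block, and I expect this step to be the delicate part of the proof.
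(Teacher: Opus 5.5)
There is a genuine gap, exactly at the step you call delicate, and it cannot be closed. You read the hypothesis as an ordinary inflated expansion of $K_s$ in $G$, and under that reading the statement is false. Take $G=K_{s,s}$ with the edges of a perfect matching as branch trees. This is an inflated expansion of $K_s$, but $G$ is bipartite, so $G\times H$ is bipartite and $\oddhad(G\times H)\le 2$. On the other hand, $K_s\times K_t$ contains the triangle $(u_1,v_1)(u_2,v_2)(u_3,v_3)$, so $\oddhad(K_s\times K_t)\ge 3$.

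You can also see directly why the triples cannot absorb $\delta_{aa'}$. Each $X_{(a,b)}$ is connected, and all its edges have the form $(x,y)(x',y')$ with $xx'\in E(S_a)$. Hence $(x,y)\mapsto c_a(x)$ is a proper colouring of it. Consequently every proper $2$-colouring of $X_{(a,b)}$ equals $c_a(x)+\sigma(a,b)$ for a single flip $\sigma(a,b)$, whichever tree of the triple or parity component a vertex lies over. Indeed, $c(y)\equiv c_a(x)$ on even components and $c(y)+1\equiv c_a(x)$ on odd ones, so your recolouring just reproduces $c_a$. A monochromatic edge $(x,y)(x',y')$ between $X_{(a,b)}$ and $X_{(a',b')}$ then requires $\sigma(a,b)+\sigma(a',b')\equiv c_a(x)+c_{a'}(x')$, a condition in which $H$ plays no role. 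If $G$ is bipartite and each $c_a$ is the restriction of the bipartition (any other choice only shifts $\sigma$), the right-hand side is always $1$. Then $\sigma$ would be a proper $2$-colouring of the non-bipartite graph $K_s\times K_t$.

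The missing ingredient is that the expansion of $K_s$ in $G$ must be \emph{odd}. This is how the paper's proof uses the hypothesis, and it is what the reduction from Theorem~\ref{thm:directbound} supplies. The witness colouring $c_G$ and the monochromatic edges $s_as_{a'}$ give $\delta_{aa'}=0$ for free, and you should colour by $\phi(x,y)=c_G(x)$ rather than by $c(y)$.

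The triples then serve a different purpose: they let you avoid choosing components at all. Let $C_b$ be the three branch trees of $H$ together with the three witness edges; it contains an odd cycle. Delete one witness edge to get a tree $C_b'$; that edge joins two vertices of equal colour in the proper colouring of $C_b'$. By Lemma~\ref{arbre_dans_prod}, $S_a\times C_b'$ has two parity classes, each connected. An edge over the deleted witness edge and over an edge of $S_a$ joins the two classes; this is where inflatedness is used. Hence all of $S_a\times C_b$ is connected. A spanning tree $\mathcal{R}_{a,b}$ of it is properly coloured by $\phi$. For any edge $vv'$ of $H$ between $C_b$ and $C_{b'}$, the edge $(s_a,v)(s_{a'},v')$ is $\phi$-monochromatic. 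Your transitivity paragraph then finishes the proof.

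Restricting to one parity component over a single tree $T$, as you first propose, fails even with an odd expansion, for two reasons. First, $T$ may be a singleton, so $S_a\times T$ is edgeless and the lemma does not apply. Second, the endpoint $(x_{aa'},y)$ of a prescribed connecting edge pins the parity of the component chosen for $(a,b)$ itself, not just the difference of parities, and this pinned value varies with $(a',b')$.
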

\begin{proof}
Let $\mathbf{S}=\{S_1,S_2,\ldots, S_S\}$ be the branch trees of an inflated odd expansion of $K_{s}$ in $G$, with $c_G$ being its 2-colouring and, for every $ i\neq j$, let $s_is_j$ be a monochromatic edge with endpoints in  $S_i$ and $S_j$, respectively.  Also let $\mathbf{T}=\{T_1,T_2,\ldots,T_{3t}\}$ be the branch trees of an odd expansion of $K_{3t}$ in~$H$, with corresponding 2-colouring $c_H$, and,   for every $i\neq j$,  let $t_it_j$ be a monochromatic edge with endpoints in $T_i$ and $T_j$, respectively. 

For each $p\in \{1,2,\cdots,t\}$, we define $\nabla_p = \{t_{1+3(p-1)}t_{2+3(p-1)}\}\cup \{t_{2+3(p-1)}t_{3p}\}\cup \{t_{3p}t_{1+3(p-1)}\}$, and the graph $C_p=T_{3(p-1)}\cup T_{1+3(p-1)}\cup T_{2+3(p-1)}\cup \nabla_p$. Thus  $C_p$ is the union of three branch trees together with the edges which attest that these trees form a triangle in the odd expansion of~$K_{3t}$ in $H$.
 \begin{claim}
     For $i\in \{1,2,\cdots,s\}$ and $p \in \{1,2,\cdots,t\}$, the graph $S_i\times C_p$ is connected.
 \end{claim}

 \begin{claimproof}
    Consider the tree $C_p'=C_p - \{t_{3p}t_{1+3(p-1)}\}$ and switch the colours given by $c_H$ in $T_{1+3(p-1)}$ and $T_{3p}$, to obtain a proper 2-colouring $c'$ of $C_p'$. Recall that  $c_{G} $ restricted to $S_i$ is a proper 2-colouring. By the proof of Lemma \ref{arbre_dans_prod}, the subgraphs of $ S_i\times C_p'$ induced by $A = \{(u,v) \in V( S_i\times C_p'):(c_G(u),c'(v)) \in \{(1,1),(2,2)\} \}$ and $B = \{(u,v) \in V(S_i\times C_p'):(c_G(u),c'(v)) \in \{(1,2),(2,1)\} \}$, are each connected. Thus it is enough to find an edge between them. But under $c'$ the edge $\{t_{3p}t_{1+3(p-1)}\}$ is still monochromatic. And since the expansion is  inflated, $S_i$ has at least one edge $xy$. Therefore the edge $(x,s_{3p})(y,s_{1+3(p-1)})$ joins $A$ and $B$.
    \end{claimproof}

Consider the  2-colouring that assigns $\phi(u,v) = c_G(u)$ to every $(u,v)\in G\times H$ with $u\in \cup_{1\le i\le s}S_i$. It is not hard to see that we can choose a spanning tree $\mathcal{R}_{i,p}$ of $T_i\times C_p$ such that $\phi$ is a proper 2-colouring of it.

\begin{claim}\label{aretes_entre_RR}
For each pair $\mathcal{R}_{i,p},\mathcal{R}_{j,q}$ with $i\neq j,p\neq q$, there is an edge that joins them and is monochromatic under~$\phi$.
\end{claim}
\begin{claimproof}

There exists $u\in T_i,u'\in T_j$ with $c_G(u)=c_G(u')$. Also there exists $v\in C_p,v'\in C_q$ with $c_H(v)=c_H(v')$.
    Then, $(u,v)(u',v')\in E(G\times H)$ and $\phi(u,v)=c_G(u)=c_G(u')=\phi(u',v')$.
\end{claimproof}

     Set $V(K_s) = \{u_1,u_2\dots,u_s\}$ and $V(K_t) = \{v_1,v_2,\dots,v_t\}$. Let $\mathcal{R}$ be the union of all  $\mathcal{R}_{i,p}$ with  $1 \leq i \leq s$ and $1\leq p \leq t$,
    and $f:   {\mathcal{R}} \rightarrow V(K_s\times K_t)$ be such that $f(\mathcal{R}_{i,p}) =(u_i,v_p)$.
 Note that $f$ is a bijection. 

Denote $m = \oddhad(K_s\times K_t)$. Let $\mathbf{Q} = \{Q_1,Q_2,\dots,Q_m\}$ be trees that form an odd  expansion of $K_m$ in $K_s \times K_t$, let $c_K$ be the 2-colouring witnessing the odd expansion, and let $e_{ij} \in E(K_s \times K_t)$ be the monocromatic edge between $Q_i$ and $Q_j$ for all $i<j\in \{1,\ldots,m\}$.

We now build the odd expansion of $K_m$ in $G\times H$, starting with the branch trees. For each $1 \leq k\leq m$, we define the tree $Z_k$ which is obtained from the forest
$$\bigcup_{x\in Q_k} f^{-1}(x) \subseteq \mathcal{R}$$
by adding the following edges of  $G\times H$: if
$(u_i,v_j)(u_{i'},v_{j'}) \in E(Q_k)$, then pick the monochromatic edge joining $f^{-1}((u_i,v_j))$ and $f^{-1}((u_{i'},v_{j'}))$, which exists by Claim \ref{aretes_entre_RR}.

For the 2-colouring of the odd expansion of $K_m$ in $G\times H$, we define $\gamma$ over $V(Z_1\cup Z_2\cup \dots \cup Z_m)$ according to the following rule: if $(a,b)\in \mathcal{R}_{i,p} $ and we have $\mathcal{R}_{i,p} = f^{-1}((u_i,v_p))$ for some $(u_i,v_p)\in \cup_{1\le k\le m} Q_k$, then we set

$$\gamma((a,b))=\begin{cases}\phi((a,b))&\text{ if }c_K((u_p,v_i))=1,\\
\overline{\phi}((a,b))&\text{ if }c_K((u_p,v_i))=2.
\end{cases}$$
where $\overline{\phi}$ is obtained by switching the colours of $\phi$. 

Let us check that $\gamma$ is a proper colouring on $V(Z_k)$ for each $k\in \{1,\ldots, m\}$. On the one hand, if two adjacent vertices $y$, $y'$ are in the same tree $\mathcal{R}_{i,p}$, then its colours under $\gamma$ are different since both $\phi$ and $\overline{\phi}$ are proper colourings for any tree $\mathcal{R}_{i,p}$. On the other hand, if $y$, $y'$ are in distinct trees 
$\mathcal{R}_{i,p}$, $\mathcal{R}_{i',p'}$, then $c_K(f(\mathcal{R}_{i,p}))\neq c_K(f(\mathcal{R}_{i',p'}))$, and hence, as $yy'$ was monochromatic under $\phi$, vertices $y, y'$ get distinct colours under $\gamma$.  

To finish the proof, we are only left to show that there is a monochromatic edge between every pair of trees $Z_i,Z_j$. Let $Q_i,Q_j\in \mathbf{Q}$ be such that $Z_k$ is obtained from $\cup_{x\in Q_i} f^{-1}(x)$ and $Z_j$ from $\cup_{x\in Q_j} f^{-1}(x)$.  There are vertices $x\in V(Q_i),x'\in V(Q_j)$ such that $xx'\in E(K_s\times K_t)$ and $c_K(x)=c_K(x')$. Hence, if the proper colourings of $f^{-1}(x)$ and $f^{-1}(x')$ under the $\gamma$ are either both $\phi$ or both $\overline{\phi}$. In both cases we apply Claim \ref{aretes_entre_RR} to get a monochromatic edge under $\gamma$.
\end{proof}


\section*{Acknowledgments}

  H. Echeverría is supported by ANID BECAS/DOCTORADO NACIONAL 21231147;  
  A. Jim\'enez is  supported by  ANID/Fondecyt Regular 1220071 and ANID-MILENIO-NCN2024-103;
  D.A. Quiroz is supported by ANID/Fondecyt Regular 1252197 and MATH-AMSUD MATH230035;
  M. Yépez is supported by ANID BECAS/DOCTORADO NACIONAL 21231444.

\bibliographystyle{plain}
\bibliography{ref}

\end{document}